\newtheorem{theorem}{Theorem}[section]
\newtheorem{proposition}{Proposition}[section]
\theoremstyle{definition}
\newtheorem{definition}[theorem]{Definition}
\newtheorem{example}[theorem]{Example}
\theoremstyle{remark}
\newtheorem{remark}[theorem]{Remark}
\numberwithin{equation}{section}
\newcommand{\blankbox}[2]
\begin{document}
\title{ Finite irreducible conformal modules of  rank two Lie conformal algebras}

\author{ Maosen Xu,    Yanyong Hong* and  Zhixiang Wu }
\address{School of Mathematics Sciences, Zhejiang University, Hangzhou, 310027, P.R.China} \email{390596169@qq.com}
\address{Department of Mathematics, Hangzhou Normal University,
Hangzhou, 311121, P.R.China} \email{hongyanyong2008@yahoo.com}
\address{School of Mathematics Sciences, Zhejiang University, Hangzhou, 310027, P.R.China} \email{wzx@zju.edu.cn}
\subjclass[2010]{17B10,  17B65, 17B66, 17B68}
\keywords{rank two Lie conformal algebra , semisimple Lie conformal algebra, conformal module}
\thanks{This work was supported by the National Natural Science Foundation of China (No. 11871421), the Zhejiang Provincial Natural Science Foundation of China (No. LY17A010015, LY20A010022) and the Scientific Research Foundation of Hangzhou Normal University (No. 2019QDL012).}
\begin{abstract}In the present paper, we prove that any  finite non-trivial  irreducible module over a rank two Lie conformal algebra $\mathcal{H}$ is of rank one. We also describe the actions of $\mathcal{H}$ on its finite  irreducible modules explicitly. Moreover, we show that all finite non-trivial  irreducible modules of  finite Lie conformal algebras whose semisimple quotient is the Virasoro Lie conformal algebra are of rank one.
\end{abstract}
\maketitle

\section{introduction}
The notion of  a Lie conformal algebra was introduced in \cite{K} providing an axiomatic description of
properties of the operator product expansion in conformal field theory. There are many other fields closely related to Lie conformal algebras such as vertex algebras, linearly compact Lie algebras and integrable systems. In the view of \cite{BDK}, the notion of a Lie conformal algebra is a generalization of that of classical Lie  algebra, i.e. it is just the Lie algebra over a pseudo-tensor category. Thus, Lie conformal algebras have many
``conformal analogue" notions and  properties as ordinary Lie algebras such as semisimple  Lie conformal algebras, cohomology groups of  Lie conformal algebras, conformal modules. It was proved in {\cite{DK}} that, up to isomorphism, every finite semisimple Lie conformal algebra is a direct sum of Lie conformal algebras of  the form:
                      $Vir, \ Vir \ltimes \text{Cur}(\mathfrak{g}), \text{Cur}(\mathfrak{g}) $
where $\mathfrak{g}$ is a finite dimensional semisimple Lie algebra. The cohomology theory of Lie conformal algebras was also established in \cite{BKV}. As ordinary Lie algebras, the second   cohomolology group  of  a Lie conformal algebra calculates  all abelian  extensions of this Lie conformal algebra by its modules (coefficients of the cohomology group). The rank one and two Lie conformal algebras were classified in   \cite{DK} and \cite{K1,HF,BCH,W1} respectively. The rank three Lie conformal algebras were partially classified in \cite{W}.
 The problem of classifying  all finite irreducible conformal modules of  a given Lie conformal algebra was also studied by many authors. Finite irreducible conformal modules of a semisimple Lie conformal algebra were classified in \cite{CK}. Finite irreducible conformal modules of other important Lie conformal algebras were investigated in \cite{WY, LHW, SXY} and so on. In the present paper, we prove that any  finite non-trivial  irreducible module over a rank two Lie conformal algebra $\mathcal{H}$ is of rank one. We also describe the actions of $\mathcal{H}$ on its finite  irreducible modules explicitly. The abelian extensions of some free rank two Lie conformal algebras  by their rank one modules are also computed. Moreover, we show that all finite non-trivial  irreducible modules of  finite Lie conformal algebras whose quotient is the Virasoro Lie conformal algebra are of rank one.

    The paper is organized as follows. In Section 2, we recall some basic notions and properties of Lie conformal algebras. In Section 3,   we study  finite  non-trivial irreducible modules  of free rank two Lie conformal algebras explicitly. As an application, we prove that any finite  non-trivial irreducible modules of finite Lie conformal algebras whose semisimple quotient is the Virasoro Lie conformal algebra must be free of rank one. In addition, the abelian extensions of some free rank two Lie conformal algebras  by their rank one modules are also computed.

    Through this paper, denote $\mathbb{C}$ and $\mathbb{Z}_+$ the sets of all complex numbers and all nonnegative integers respectively.  In addition, all vector spaces and tensor products are over $\mathbb{C}$. For any vector space $V$,
    we use $V[\lambda]$ to denote the set of polynomials of $\lambda$ with coefficients in $V$.

 \section{Preliminary}
 In this section, we recall some basic definitions and results of Lie conformal algebras and their modules.

 \begin{definition}\label{d1}  A {\it Lie conformal algebra} $\mathcal{A}$ is a $\mathbb{C}[\partial]$-module together  with a $\mathbb{C}$-linear map (call $\lambda$-bracket)  $\mathcal{A} \otimes \mathcal{A} \rightarrow \mathcal{A}[\lambda]$, $a\otimes b \mapsto [a_\lambda b]$, satisfying the following axioms
\begin{equation*}
\begin{aligned}
&[\partial a_\lambda b]=-\lambda[a_\lambda b], \   [a_\lambda \partial b]=(\partial+\lambda)[a_\lambda b],\ \ (conformal\   sequilinearity),\\
&[a_\lambda b]=-[b_{-\lambda-\partial}a]\ \ (skew-symmetry),\\
&[a_\lambda [b_\mu c]]=[[a_\lambda b]_{\lambda+\mu}c]+[b_\mu[a_\lambda c]]\ \ (Jacobi \ identity),
\end{aligned}
\end{equation*}
for all $a$, $b$, $c\in \mathcal{A}$.
\end{definition}
By means of  conformal  sequilinearity, we can define a Lie conformal algebra by giving  the $\lambda$-brackets on its generators over $\mathbb{C}[\partial]$. In addition, the \emph{rank} of a Lie conformal algebra is  its rank as a $\mathbb{C}[\partial]$-module. We say a Lie conformal algebra {\it finite} if it is finitely generated as a $\mathbb{C}[\partial]$-module.
\begin{example} (\cite{DK})
The Virasoro Lie conformal algebra  $Vir=\mathbb{C}[\partial]L$ is a free  $\mathbb{C}[\partial]$-module of rank one, whose $\lambda$-brackets are determined by  $[L_\lambda L]=(\partial+2\lambda)L$. Further, it is well known that $Vir$ is a simple Lie conformal algebra.

For a Lie algebra $\mathfrak{g}$, the current Lie conformal algebra $\text{Cur}\mathfrak{g}$ is a free $\mathbb{C}[\partial]$-module $\mathbb{C}[\partial] \otimes \mathfrak{g}$  equipped with the $\lambda$-brackets:
     \[ [a_\lambda b]=[a,b],\ \ \text{for}\ \  a,b \in \mathfrak{g}. \]
  The Lie conformal algebra $Vir \ltimes \text{Cur}{\mathfrak{g}}$ is  defined by the following $\lambda$-brackets
    \[  [L_\lambda a]=(\partial+\lambda)a,\qquad  \text{ for all }a\in\mathfrak{g}.\]
\end{example}
 \begin{example}
    For $a,b \in \mathbb{C}$, the Lie conformal algebra $\mathcal{W}(a,b)=\mathbb{C}[\partial]L \oplus \mathbb{C}[\partial]Y$ is a free  rank two $\mathbb{C}[\partial]$-module with:
\[ [L_\lambda Y]=(\partial+a\lambda+b)Y, \  [L_\lambda L]=(\partial+2\lambda)L, \ [Y_\lambda Y]=0. \]
\end{example}
Suppose $\mathcal{A}$ is a Lie  conformal algebra.  For any $a,b \in \mathcal{A}$, we write
     \begin{eqnarray}{\label{e2}}
     \begin{array}{ll}
     [a_\lambda b]=\sum_{j\in \mathbb{Z}_+}(a_{(j)}b)\frac{\lambda^j}{j!}.
     \end{array}
     \end{eqnarray}
For every $j\in \mathbb{Z}_+$, we have the $\mathbb{C}$-linear map: $\mathcal{A}\otimes  \mathcal{A} \rightarrow  \mathcal{A}$, $a\otimes b \mapsto a_{(j)}b$, which is called {\it $j$-th product}.
 A $\mathbb{C}[\partial]$-submodule $\mathcal{B}$  of $\mathcal{A}$ is called a  \emph{subalgebra} of $\mathcal{A}$ if $\mathcal{B}$ is closed under all  $j$-th products , that is , $a_{(j)}b \in \mathcal{B}$ for any $a, b \in \mathcal{B}$ and $j \in \mathbb{Z}_+$. A subalgebra $\mathcal{B}$ is called an \emph{ideal} of $\mathcal{A}$ if  $a_{(j)}b \in \mathcal{B}$ for any $a \in \mathcal{A}$, $b \in \mathcal{B}$,  $j \in \mathbb{Z}_+$.   For example,  $\text{Cur}\mathfrak{g}$ is an ideal of $Vir \ltimes \text{Cur}\mathfrak{g}$, and $Vir$ is a subalgebra of  $Vir \ltimes \text{Cur}\mathfrak{g}$.

The derived algebra of a Lie conformal algebra $\mathcal{A}$ is the  vector space $\mathcal{A}'=\text{span}_{\mathbb{C}}\{a_{(n)} b\mid a\in \mathcal{A}, b \in \mathcal{A},  n\in \mathbb{Z}_+\}$. It is easy to check that  $\mathcal{A}'$ is an ideal of  $\mathcal{A}$. Define $\mathcal{A}^{(1)}=\mathcal{A}'$ and $\mathcal{A}^{(n+1)}={\mathcal{A}^{(n)}}'$ for any $n \geq 1$. Then we get the  {\it derived   series}
\begin{eqnarray*}
\mathcal{A}= \mathcal{A}^{(0)}\supset\mathcal{A}^{(1)}\supset\cdots  \mathcal{A}^{(n)}\supset\mathcal{A}^{(n+1)}\supset\cdots
\end{eqnarray*}
       of $\mathcal{A}$.
  If there exists $N \in \mathbb{Z}_+$ such that $\mathcal{A}^{(n)}=0$ for any $n\geq N$, then
$\mathcal{A}$ is said to be {\it solvable}. Note that the sum of two solvable ideals of a Lie conformal algebra $A$ is still a solvable ideal. Thus if $M$ is a finite Lie conformal algebra, the maximal solvable ideal exists and is unique which we denote by $\text{Rad}(M)$. A Lie conformal algebra  is {\it semisimple} if its radical is zero. Thus $M/\text{Rad}(M)$ is semisimple for any finite Lie conformal algebra $M$.

It was proved in {\cite{DK}} that any finite semisimple Lie conformal algebra is the direct sum of   the following Lie conformal algebras:
                      \[ Vir,\  \text{Cur}(\mathfrak{g}), \ Vir \ltimes \text{Cur}(\mathfrak{g}),  \]
     where $\mathfrak{g}$ is a finite dimensional semisimple  Lie algebra.

 Next, let us recall the definition of conformal module of a Lie conformal algebra $\mathcal{A}$.

 \begin{definition}{\label{d2}} For a Lie conformal algebra $\mathcal{A}$, a {\it conformal $\mathcal{A}$-module} $M$  is a $\mathbb{C}[\partial]$-module with a $\mathbb{C}$-linear map  $\mathcal{A} \otimes M \rightarrow M[\lambda]$, $a\otimes m \mapsto a_\lambda m$, satisfying the following axioms:
\begin{equation*}
\begin{aligned}
&\partial a_\lambda m=-\lambda(a_\lambda m), \ \   a_\lambda \partial m=(\partial+\lambda)a_\lambda m,\\
&a_\lambda (b_\mu m)=[a_\lambda b]_{\lambda+\mu}m+b_\mu(a_\lambda m),
\end{aligned}
\end{equation*}
for all $a$, $b\in \mathcal{A}$ and $m\in M$.
\end{definition}
Since  we only consider   conformal modules   in this paper,  the  conformal $\mathcal{A}$-modules are simply called  {\it $\mathcal{A}$-modules}. If an $\mathcal{A}$-module  $M$ is   finitely generated as  a $\mathbb{C}[\partial]$-module, then we call it a {\it finite} $\mathcal{A}$-module. The \emph{rank} of  $M$ is its rank as a $\mathbb{C}[\partial]$-module.  The  notions of a  submodule and an irreducible submodule are defined by using  $j$-th products in the obvious way.

An $\mathcal{A}$-module $M$ is said to be \emph{trivial} if $a_\lambda m=0$ for any $a\in\mathcal{A}$ and $m\in M$.
 Suppose that $M$ is a  finite  $\mathcal{A}$-module. Let $\text{Tor}M:=\{m\in M \mid f(\partial)m=0\  \text{for}\  \text{some nonzero}\  f(\partial) \in \mathbb{C}[\partial]\}$. Then $\text{Tor}(M)$ is a trivial  $\mathcal{A}$-submodule of $M$ by  \cite[Lemma 8.2]{DK}.  Thus any finite dimensional  $\mathcal{A}$-submodule of $M$ is trivial.
Let $\mathbb{C}_u=\mathbb{C}$ as a vector space for some $u\in\mathbb{C}$. Then  $\mathbb{C}_u$ becomes  a one-dimensional  $\mathcal{A}$-module with
   $\partial c=uc$ and $a_\lambda c=0$  for any $a\in \mathcal{A}$ and any  $c \in \mathbb{C}_u$.  Since any finite dimensional  $\mathcal{A}$-submodule of $M$ is trivial,  any irreducible finite dimensional  $\mathcal{A}$-module is in such form.  Hence we mainly focus  on studying  finite non-trivial $\mathcal{A}$-modules in the sequel.
\begin{example} {\it Regular module}. For any Lie conformal algebra $\mathcal{A}$, we can regard $\mathcal{A}$ as an $\mathcal{A}$-module with respect to the  $\lambda$-brackets of $\mathcal{A}$.  Thus $\text{Tor}(\mathcal{A})$ is in the center of $\mathcal{A}$.
 \end{example}
 Suppose $M$ is a finite $\mathcal{A}$-module. Consider the natural semiproduct Lie conformal algebra $\mathcal{A}\ltimes M$. Then $\text{Tor}(\mathcal{A})$ is in the center of $\mathcal{A}\ltimes M$. Thus  $\text{Tor}(\mathcal{A})$ acts trivially on $M$. Hence  we also focus on  finite Lie conformal algebras  which are free as $\mathbb{C}[\partial]$-modules in the sequel.

  The following  proposition is well known.

   \begin{proposition}(\cite[Theorem 8.2]{DK}){\label{t2.5}} Any non-trivial free rank one  $Vir$-module has the form $M_{a,b}=\mathbb{C}[\partial]v$, such that
       \[ L_\lambda v=(\partial+a\lambda+b)v,\]
        for $a,b \in \mathbb{C}$. Moreover, if $a \neq 0$, then  $M_{a,b}$ is irreducible and any non-trivial finite irreducible $Vir$-module is in  such form.  If $a=0$, then $M_{0,b}$ has the  unique  finite irreducible proper $Vir$-submodule $(\partial+b)M_{0,b}$, which is isomorphic to  $M_{1,b}$.
        \end{proposition}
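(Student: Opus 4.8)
The plan is to reduce the statement to a computation on free rank-one modules, solve the resulting functional equation to pin down the $\lambda$-action, and then settle the irreducibility and submodule assertions by an elementary divisibility argument; the one genuinely hard input -- that a non-trivial finite irreducible $Vir$-module is forced to have rank one -- is isolated at the end.

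Since $\text{Tor}(M)$ is always a trivial submodule of a finite module $M$, a non-trivial finite irreducible $Vir$-module is torsion-free, hence free over the PID $\mathbb{C}[\partial]$. Granting for now that its rank must be $1$, write $M=\mathbb{C}[\partial]v$; by conformal sesquilinearity the action of $Vir=\mathbb{C}[\partial]L$ is determined by $L_\lambda v=P(\partial,\lambda)v$ for a unique $P\in\mathbb{C}[\partial,\lambda]$, with $P\neq 0$ exactly when $M$ is non-trivial. Expanding the module identity $L_\lambda(L_\mu v)=[L_\lambda L]_{\lambda+\mu}v+L_\mu(L_\lambda v)$ using sesquilinearity and $[L_\lambda L]=(\partial+2\lambda)L$ would give the functional equation
\[
P(\partial+\lambda,\mu)\,P(\partial,\lambda)=(\lambda-\mu)\,P(\partial,\lambda+\mu)+P(\partial+\mu,\lambda)\,P(\partial,\mu).
\]
Putting $\mu=0$ and cancelling the nonzero factor $P(\partial,\lambda)$ (the ring $\mathbb{C}[\partial,\lambda]$ is a domain) gives $P(\partial+\lambda,0)-P(\partial,0)=\lambda$, so $P(\partial,0)=\partial+b$ for some $b\in\mathbb{C}$. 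Differentiating the functional equation in $\mu$ and evaluating at $\mu=0$, with $p_1(\partial):=\partial_\mu P(\partial,\mu)|_{\mu=0}$, would yield
\[
\bigl(p_1(\partial+\lambda)-p_1(\partial)+1\bigr)\,P(\partial,\lambda)=\lambda\,\partial_\lambda P(\partial,\lambda)+(\partial+b)\,\partial_\partial P(\partial,\lambda).
\]
Here comparing $\lambda$-degrees forces $p_1$ to be a constant $a$ (if $\deg p_1\geq 1$ the left side has strictly larger $\lambda$-degree than the right), and then comparing the coefficient of each $\lambda^i$ in the resulting identity $P=\lambda\,\partial_\lambda P+(\partial+b)\,\partial_\partial P$ gives $(1-i)p_i=(\partial+b)p_i'$ for the coefficients $p_i(\partial)$ of $P$; a one-line degree count forces $p_i=0$ for $i\geq 2$. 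Hence $P(\partial,\lambda)=\partial+a\lambda+b$ and $M\cong M_{a,b}$, which also shows that any non-trivial finite irreducible $Vir$-module (free of rank one by the deferred fact) is some $M_{a,b}$.

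For the remaining assertions about $M_{a,b}=\mathbb{C}[\partial]v$: a nonzero submodule is $f(\partial)\mathbb{C}[\partial]v$ for a monic $f\in\mathbb{C}[\partial]$, and being a submodule is equivalent to the divisibility $f(\partial)\mid f(\partial+\lambda)(\partial+a\lambda+b)$ in $\mathbb{C}[\partial,\lambda]$. If $a\neq 0$ and $f$ had a root $c\in\mathbb{C}$, then specializing $\partial=c$ in this divisibility would force the product $f(c+\lambda)(c+a\lambda+b)$ of two nonzero polynomials of $\mathbb{C}[\lambda]$ to vanish, a contradiction; so $f$ is constant and $M_{a,b}$ is irreducible. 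If $a=0$, the condition $f(\partial)\mid f(\partial+\lambda)(\partial+b)$ holds precisely when $f$ divides $\partial+b$, so $(\partial+b)M_{0,b}$ is the unique proper nonzero submodule of $M_{0,b}$; finally $v\mapsto(\partial+b)v$ identifies it with $M_{1,b}$, since $L_\lambda\bigl((\partial+b)v\bigr)=(\partial+\lambda+b)(\partial+b)v$, and in particular it is irreducible.

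The main obstacle is the step deferred above: that a non-trivial finite irreducible $Vir$-module has rank one. This is the substance of \cite[Theorem 8.2]{DK}. One route is to pass to the annihilation Lie algebra of $Vir$, which is the Lie algebra $\mathbb{C}[t]\partial_t$ of polynomial vector fields on the line, to check that finite conformal $Vir$-modules correspond to $\mathbb{C}[t]\partial_t$-modules that are finitely generated over $\mathbb{C}[[t]]$, and to observe that the irreducible such modules are the tensor-density modules, which are free of rank one as conformal modules. A more direct alternative is to repeat the functional-equation analysis above with $P$ replaced by the $k\times k$ matrix $A(\partial,\lambda)$ of $L_\lambda$ on a free rank-$k$ module and to prove that an irreducible solution must have $k=1$; this matrix analysis is considerably more delicate than the scalar case, which is why I expect it to be the crux of the argument.
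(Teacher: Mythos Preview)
The paper does not prove this proposition; it is quoted from \cite[Theorem 8.2]{DK} and stated without argument, so there is no in-paper proof to compare against.

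Your argument is correct. The functional-equation analysis pinning down $P(\partial,\lambda)=\partial+a\lambda+b$ is clean: the substitution $\mu=0$ gives $P(\partial,0)=\partial+b$, the $\mu$-derivative at $0$ together with a degree count forces $p_1$ constant and kills $p_i$ for $i\ge 2$. The submodule analysis is also sound: a $\mathbb{C}[\partial]$-submodule of $\mathbb{C}[\partial]v$ is a principal ideal $f(\partial)\mathbb{C}[\partial]v$, the $Vir$-invariance condition is exactly the stated divisibility, and your root argument handles both cases $a\neq 0$ and $a=0$ correctly (in the latter case you in fact prove the stronger statement that $(\partial+b)M_{0,b}$ is the unique nonzero proper submodule, not merely the unique irreducible one). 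You correctly isolate the one substantive input---that a non-trivial finite irreducible $Vir$-module is free of rank one---and attribute it to \cite{DK}; this is precisely what the paper does as well, by citing rather than reproving the result. In effect you have supplied a self-contained proof of everything except the rank-one step, which is more than the paper itself attempts.
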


 Finally, let us recall the (extended) annihilation Lie algebra of a Lie conformal algebra. The {\it annihilation Lie algebra} $\text{Lie}( \mathcal{A})^+$ of a Lie conformal algebra $ \mathcal{A}$ is the vector space   $\text{span}_\mathbb{C}\{a_{(n)}\mid a \in   \mathcal{A} , \  n\in \mathbb{Z}_+\}$ with  relations
     \[(\partial a)_{(n)}=-na_{(n-1)},\ (a+b)_{(n)}=a_{(n)}+b_{(n)},\ (ka)_{(n)}=ka_{(n)}, \]for  $a,b \in \mathcal{A}$ and  $k \in \mathbb{C}$,
and the  Lie brackets of  $\text{Lie}( \mathcal{A})^+$ are given by
  \[  [a_{(m)},b_{(n)}]= \sum\limits_{i\in \mathbb{Z}_+} \binom m i (a_{(i)}b)_{(m+n-i)}. \]
  The {\it extended annihilation Lie algebra} $\text{Lie}( \mathcal{A})^e$ is the Lie algebra $\mathbb{C}\partial \ltimes \text{Lie}( \mathcal{A})^+$ with the brackets $[\partial, a_{(n)}]=-na_{(n-1)}$.
  \begin{example} Suppose $\mathcal{A}=\mathbb{C}[\partial]L$ is the Virasoro Lie conformal algebra. Define $L_n=L_{(n+1)}$ for $n\geq -1$. Then $\text{Lie}(\mathcal{A})^+$ is a Lie algebra
  with basis $\{L_n,n\geq -1\}$ and  brackets as following:
    \[ [L_m,L_n]=(m-n)L_{m+n}, \]
 for   $\ m,n \in \mathbb{Z}_+$.
   One can see that  $ \text{Lie}(\mathcal{A})^+ \cong
  Vect(\mathbb{C})$ where $Vect(\mathbb{C})$ is the regular vector fields over $\mathbb{C}$.
    \end{example}

  A  $\text{Lie}( \mathcal{A})^e$-module $V$ is said to be {\it conformal} if for any $v \in V$ and $ a \in A$, there exists   $N(a,v) \in \mathbb{Z}_+$ such that $a_{(n)}v=0$ when $n>N(a,v)$. In fact, any $\mathcal{A}$-module determines a unique conformal $\text{Lie}( \mathcal{A})^e$-module and vice versa.

   \begin{proposition}{\label{t2}} (see \cite{CK})
   Suppose that $V$ is a finite $\mathcal{A}$-module.   For any $a \in A$, $v \in V$,  we write
     \begin{eqnarray}
     \begin{array}{ll}
     a_\lambda v=\sum_{j\in \mathbb{Z}_+}(a_{(j)}v)\frac{\lambda^j}{j!}.
     \end{array}
     \end{eqnarray}
     Then $V$ is a conformal $\text{Lie}( \mathcal{A})^e$-module with the action:
     \[a_{(n)}\cdot v=a_{(n)}v,\ \  \partial\cdot v=\partial v.\]
     Conversely, if $V$ is a conformal $\text{Lie}( \mathcal{A})^+$-module, then $V$ is also an $\mathcal{A}$-module with the action:
     \begin{eqnarray}
     \begin{array}{ll}
     a_\lambda v=\sum_{j\in \mathbb{Z}_+}(a_{(j)}\cdot v)\frac{\lambda^j}{j!}.
     \end{array}
     \end{eqnarray}
    In particular, $V$ is a finite irreducible $\mathcal{A}$-module if and only if $V$ is an  irreducible conformal  $\text{Lie}( \mathcal{A})^e$-module.
    \end{proposition}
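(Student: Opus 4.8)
The plan is to extract the equivalence directly from the generating-function identity $a_\lambda v=\sum_{j\in\mathbb{Z}_+}(a_{(j)}v)\frac{\lambda^{j}}{j!}$, matching each axiom in Definition~\ref{d2} term by term with the defining relations and brackets of $\text{Lie}(\mathcal{A})^{e}$. To begin, since $V$ is finite over $\mathbb{C}[\partial]$ and $a_\lambda v$ lies in $V[\lambda]$, the polynomial $a_\lambda v$ has bounded $\lambda$-degree, so $a_{(j)}v=0$ once $j$ exceeds some $N(a,v)$; this is precisely the conformality requirement, so once we know the proposed assignment is an action of $\text{Lie}(\mathcal{A})^{e}$ it is automatically a conformal one.

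The verification of the $\text{Lie}(\mathcal{A})^{e}$-module axioms is then a bookkeeping exercise in comparing coefficients. Reading off the coefficient of $\lambda^{n}$ in the first sesquilinearity relation $(\partial a)_\lambda v=-\lambda(a_\lambda v)$ yields $(\partial a)_{(n)}v=-n\,a_{(n-1)}v$, which is exactly the relation imposed in $\text{Lie}(\mathcal{A})^{+}$; $\mathbb{C}$-linearity of $a\mapsto a_{(n)}$ is immediate from that of the $\lambda$-action. The second sesquilinearity relation $a_\lambda(\partial v)=(\partial+\lambda)(a_\lambda v)$ gives, coefficient by coefficient, $\partial(a_{(n)}v)-a_{(n)}(\partial v)=-n\,a_{(n-1)}v$, i.e. $[\partial,a_{(n)}]$ acts as $-n\,a_{(n-1)}$. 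Finally, expanding both sides of the module axiom $a_\lambda(b_\mu v)=[a_\lambda b]_{\lambda+\mu}v+b_\mu(a_\lambda v)$ in powers of $\lambda$ and $\mu$, substituting $[a_\lambda b]=\sum_{i}(a_{(i)}b)\frac{\lambda^{i}}{i!}$ together with the binomial expansion of $(\lambda+\mu)^{k}$, and extracting the coefficient of $\frac{\lambda^{m}\mu^{n}}{m!\,n!}$ produces
\[a_{(m)}(b_{(n)}v)-b_{(n)}(a_{(m)}v)=\sum_{i\in\mathbb{Z}_+}\binom{m}{i}(a_{(i)}b)_{(m+n-i)}v,\]
which says exactly that $[a_{(m)},b_{(n)}]$ acts on $V$ by $\sum_{i}\binom{m}{i}(a_{(i)}b)_{(m+n-i)}$. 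Thus $V$ carries a conformal $\text{Lie}(\mathcal{A})^{e}$-module structure.

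The converse runs the same computation backwards. Given a conformal $\text{Lie}(\mathcal{A})^{e}$-module $V$---where the action of $\partial$ supplies the underlying $\mathbb{C}[\partial]$-module structure---conformality makes $a_\lambda v:=\sum_{j}(a_{(j)}\cdot v)\frac{\lambda^{j}}{j!}$ a finite sum, hence an element of $V[\lambda]$, and the same coefficient comparisons now read the three identities above as the axioms of Definition~\ref{d2}. The two constructions are mutually inverse because passing between $a_\lambda v$ and the family $(a_{(j)}v)_{j\in\mathbb{Z}_+}$ are inverse operations, and finiteness is preserved since the $\mathbb{C}[\partial]$-module structure never changes. For the last assertion, observe that a conformal $\mathcal{A}$-submodule of $V$ is by definition a $\mathbb{C}[\partial]$-submodule closed under all $j$-th products $a_{(j)}$; since $\partial$ is among the generators of $\text{Lie}(\mathcal{A})^{e}$, this is the same data as a $\text{Lie}(\mathcal{A})^{e}$-submodule. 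Hence the two module structures on the fixed underlying space have the same submodule lattice, so $V$ is irreducible as an $\mathcal{A}$-module if and only if it is irreducible as a conformal $\text{Lie}(\mathcal{A})^{e}$-module.

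The only step requiring genuine care is the expansion of the module Jacobi-type identity: one has to track which variable each $\partial$ acts on and check that the binomials $\binom{m}{i}$ come out matching the bracket of $\text{Lie}(\mathcal{A})^{+}$ rather than, say, $\binom{n}{i}$ or a shifted index. Everything else---conformality, linearity, the sesquilinearity relations, and the identification of submodule lattices---is formal, and the irreducibility statement is a tautology once that identification is made.
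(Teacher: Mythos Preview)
The paper does not supply its own proof of this proposition; it is stated with the citation ``(see \cite{CK})'' and no argument is given. Your proof is correct and is essentially the standard one: you unpack the generating-function identity coefficient by coefficient, matching sesquilinearity with the relations $(\partial a)_{(n)}=-na_{(n-1)}$ and $[\partial,a_{(n)}]=-na_{(n-1)}$, and matching the module Jacobi identity with the bracket $[a_{(m)},b_{(n)}]=\sum_i\binom{m}{i}(a_{(i)}b)_{(m+n-i)}$, then observe that the submodule lattices coincide. One minor remark: the statement as printed in the paper says ``conformal $\text{Lie}(\mathcal{A})^{+}$-module'' in the converse direction, whereas your argument (correctly, and as the final irreducibility claim requires) treats a conformal $\text{Lie}(\mathcal{A})^{e}$-module so that the $\partial$-action is available; this is almost certainly a typo in the paper rather than a gap in your proof.
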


     \section{Irreducible modules of free rank two Lie conformal algebras}

In  this section, we will study finite non-trivial irreducible modules of free rank two Lie conformal algebras. As an application, we prove that any  finite non-trivial irreducible modules of a finite Lie conformal algebra $M$ whose semisimple quotient is the Virasoro Lie conformal algebra, that is, $M/\text{Rad}(M)=Vir$, must be free rank of one as a $\mathbb{C}[\partial]$-module. Finally, the abelian extensions of some free rank two Lie conformal algebras by their rank one modules are also computed.

  We use $\mathcal{H}$ to denote a free  rank two Lie conformal algebra in the sequel. If $\mathcal{H}$ is semisimple, then $\mathcal{H}$ is the direct sum of two Virasoro Lie conformal algebras, where these two algebras are ideals of $\mathcal{H}$. As for the non-semisimple case,
 we have the following  proposition.
   \begin{proposition}\label{pp1}(\cite[ Theorem 2.21] {BCH}) If $\mathcal{H}$ is solvable, then there is a basis $\{A,B\}$ such that
     \begin{equation}{\label{312}}
      [B_\lambda B]=0,\ [A_\lambda B]=P_1(\partial,\lambda)B,\ [A_\lambda A]=Q_1(\partial,\lambda)B,
      \end{equation}
 where $P_1(\partial,\lambda), Q_1(\partial,\lambda)\in \mathbb{C}[\partial,\lambda]$.

  Suppose that $\mathcal{H}$ is  neither solvable nor semisimple. Then  there is a basis $\{A,B\}$  such that
      \begin{equation}\label{ee1}
      \begin{aligned}
      &[B_\lambda B]=0,\ [A_\lambda B]=\delta(\partial+a \lambda +b)B,\\
       &[A_\lambda A]=(\partial+2\lambda)A+Q(\partial,\lambda)B, \  \delta \in \{0,1\},
       \end{aligned}
      \end{equation}
where $a,b \in \mathbb{C}$ and   $Q(\partial,\lambda)$ is some polynomial depending on $a,b,\delta$.
More explicitly,  $Q(\partial,\lambda)=0$ if $b \neq 0$ or $\delta=0$.  Otherwise, we have

   \centerline{
   \begin{tabular}{|c|c|}
   \hline
   $a \in \mathbb{C}$  &  $Q(\partial, \lambda)$, $\beta, \gamma \in \mathbb{C}$, \\ \hline
      1 & $\beta(2\lambda+\partial)$ \\ \hline
      0 & $\beta(2\lambda+\partial)(\lambda^2+\lambda\partial)+\gamma(2\lambda+\partial)\partial$ \\ \hline
      -1 & $\beta(2\lambda+\partial)\partial^2+\gamma(2\lambda+\partial)(\lambda^2+\lambda\partial)\partial$ \\ \hline
      -4  & $\beta(2\lambda+\partial)(\lambda^2+\lambda\partial)^3$ \\ \hline
      -6  & $\beta(2\lambda+\partial)[11(\lambda^2+\lambda\partial)^4+2(\lambda^2+\lambda\partial)^3\partial^2]$\\  \hline
      \end{tabular}
      }
      \end{proposition}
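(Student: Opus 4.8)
The plan is to classify free rank two Lie conformal algebras by a case analysis driven by the structure of the derived series, and to realize the non-trivial $\lambda$-brackets as explicit polynomials determined by the conformal sesquilinearity, skew-symmetry and the Jacobi identity. The first step is to note that since $\mathcal{H}=\mathbb{C}[\partial]A_0\oplus\mathbb{C}[\partial]B_0$ is free of rank two, its derived algebra $\mathcal{H}'$ is a $\mathbb{C}[\partial]$-submodule, hence (being torsion-free of rank $\le 2$) is either $0$, free of rank one, or finite index in $\mathcal{H}$. If $\mathcal{H}'=0$ the algebra is abelian, a degenerate subcase of the solvable conclusion with $P_1=Q_1=0$. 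If $\mathcal{H}$ is solvable but non-abelian, then by replacing generators we may arrange that $\mathcal{H}'$ sits inside one free summand; one shows there is a basis $\{A,B\}$ with $B$ spanning (the relevant part of) an abelian ideal, so that $[B_\lambda B]=0$ automatically, and the remaining brackets $[A_\lambda B]$ and $[A_\lambda A]$ land in $\mathbb{C}[\partial]B$ by the ideal property, giving \eqref{312} with the only constraint being that skew-symmetry forces $[A_\lambda A]=-[A_{-\lambda-\partial}A]$, i.e. $Q_1(\partial,\lambda)=-Q_1(\partial,-\lambda-\partial)$, and that the Jacobi identity on $(A,A,B)$ is automatically satisfied since all triple brackets land in the abelian part $\mathbb{C}[\partial]B$. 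This disposes of the solvable case.

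For the case where $\mathcal{H}$ is neither solvable nor semisimple, I would first invoke that $\mathcal{H}/\mathrm{Rad}(\mathcal{H})$ is semisimple of rank $\le 1$, hence is either $0$ (impossible, as that would make $\mathcal{H}$ solvable) or $Vir$. Therefore $\mathrm{Rad}(\mathcal{H})$ is a free rank one solvable (in fact abelian) ideal, spanned by some $B$ with $[B_\lambda B]=0$, and $\mathcal{H}/\mathbb{C}[\partial]B\cong Vir$. Lift the Virasoro generator to some $A\in\mathcal{H}$ with $[A_\lambda A]\equiv(\partial+2\lambda)A \pmod{\mathbb{C}[\partial]B}$, so $[A_\lambda A]=(\partial+2\lambda)A+Q(\partial,\lambda)B$. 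Since $B$ spans an ideal, $[A_\lambda B]=R(\partial,\lambda)B$ for some $R\in\mathbb{C}[\partial,\lambda]$; here $B$, regarded as a module over the Virasoro subalgebra generated (modulo $\mathbb{C}[\partial]B$) by $A$, is a rank one $Vir$-module, and Proposition \ref{t2.5} forces $R(\partial,\lambda)$ to be of the form $\delta(\partial+a\lambda+b)$ after rescaling $B$ (the coefficient $\delta$ recording whether the action is trivial or not). The remaining work is to pin down $Q(\partial,\lambda)$: skew-symmetry gives the relation $Q(\partial,\lambda)=-Q(\partial,-\lambda-\partial)$, and the Jacobi identity for the triple $(A,A,B)$ yields a functional equation relating $Q$ and the polynomial $\delta(\partial+a\lambda+b)$. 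Solving this equation is where the table comes from.

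The main obstacle, and the technical heart of the proof, is the resolution of the Jacobi identity $[A_\lambda[A_\mu B]]=[[A_\lambda A]_{\lambda+\mu}B]+[A_\mu[A_\lambda B]]$. Expanding, the left and last terms involve $R$ composed with itself, while the middle term splits as $[(\partial+2\lambda+\partial_{\text{new}})A_{\lambda+\mu}B] + [Q(\partial,\lambda)B_{\lambda+\mu}B]$; the second piece vanishes because $[B_\lambda B]=0$, but the first reintroduces $R$, and one obtains a polynomial identity in $\partial,\lambda,\mu$ constraining $Q$ in terms of $a,b,\delta$. When $\delta=0$ or when $b\ne 0$, a short argument shows the only skew-symmetric solution is $Q=0$ (this uses that the relevant "cocycle" space for the Virasoro action on $M_{a,b}$ with $b\neq 0$ is trivial). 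When $\delta=1$ and $b=0$, the equation becomes the classical computation of $H^2(Vir, M_{a,0})$-type cocycles, whose nonzero contributions occur precisely at the exceptional values $a\in\{1,0,-1,-4,-6\}$ with the listed generators — a computation one either does by hand with an ansatz of bounded polynomial degree, or imports from \cite{BCH}. I would present the degenerate and generic subcases in detail and then state that the exceptional polynomials are obtained by the same finite linear-algebra computation, citing \cite[Theorem 2.21]{BCH} for the exact normal forms.
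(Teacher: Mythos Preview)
The paper does not prove this proposition at all; it is quoted verbatim as \cite[Theorem~2.21]{BCH} and used as input for the rest of Section~3. So there is no ``paper's own proof'' to compare against, and your proposal is effectively a reconstruction of the argument in \cite{BCH}.

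Your overall architecture is the standard one and matches what \cite{BCH} does: in the non-solvable non-semisimple case, $\mathrm{Rad}(\mathcal H)$ is a rank one abelian ideal $\mathbb C[\partial]B$, the quotient is $Vir$, the induced $Vir$-module structure on $\mathbb C[\partial]B$ is classified by Proposition~\ref{t2.5}, and the residual polynomial $Q(\partial,\lambda)$ is a $2$-cocycle of $Vir$ with values in $M_{a,b}$, determined up to coboundary by the freedom $A\mapsto A+g(\partial)B$. That last cohomological interpretation is exactly what produces the table.

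There is, however, a concrete error in your identification of the governing Jacobi identity. You write that the constraint on $Q$ comes from $[A_\lambda[A_\mu B]]=[[A_\lambda A]_{\lambda+\mu}B]+[A_\mu[A_\lambda B]]$. But in that identity the term $[[A_\lambda A]_{\lambda+\mu}B]$ contributes $Q(-\lambda-\mu,\lambda)[B_{\lambda+\mu}B]=0$, so $Q$ drops out entirely; the $(A,A,B)$ identity only re-verifies that $M_{a,b}$ is a $Vir$-module. The functional equation that actually pins down $Q$ is the $B$-component of the $(A,A,A)$ Jacobi identity,
\[
(\partial+\lambda+2\mu)Q(\partial,\lambda)+\delta(\partial+a\lambda+b)Q(\partial+\lambda,\mu)
=(\lambda-\mu)Q(\partial,\lambda+\mu)+\{\lambda\leftrightarrow\mu\},
\]
together with the skew-symmetry $Q(\partial,\lambda)=-Q(\partial,-\lambda-\partial)$, which is precisely the $2$-cocycle condition in $C^2(Vir,M_{a,b})$. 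Your conclusion (the exceptional values $a\in\{1,0,-1,-4,-6\}$ and the listed generators) is correct, but it follows from this equation, not the one you wrote.

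A smaller point: in the solvable case you assert that the $(A,A,B)$ Jacobi identity ``is automatically satisfied since all triple brackets land in the abelian part $\mathbb C[\partial]B$''. Landing in an abelian ideal does not make the identity vacuous; it still imposes $P_1(\partial+\lambda,\mu)P_1(\partial,\lambda)=P_1(\partial+\mu,\lambda)P_1(\partial,\mu)$. This does not affect the proposition as stated, since \eqref{312} only asserts the \emph{form} of the brackets, not a classification of admissible $P_1,Q_1$, but the reasoning you give for it is incorrect.
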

   \begin{remark} Let  $\mathcal{H}$ be a  free rank two Lie conformal algebra. Then $Ado\ Theorem$ holds for $\mathcal{H}$, that is, $\mathcal{H}$ has a finite  faithful module. \cite[Theorem 3]{PK} states that any Lie conformal algebra in the form $\mathcal{A}=\mathcal{S} \ltimes \mathcal{R}$, where  $\mathcal{S}$ is a finite semisimple Lie conformal algebra or $\mathcal{S}=0$ and $\mathcal{R}$ is the radical ideal of $\mathcal{A}$,  has a finite  faithful module.
    Thus it is clear when $\mathcal{H}$ is semisimple or solvable.  Suppose $\mathcal{H}=\mathbb{C}[\partial]A \oplus \mathbb{C}[\partial]B$ is a Lie conformal algebra defined in  (\ref{ee1}). If $Q(\partial,\lambda)$ is trivial, then by \cite[Theorem 3]{PK}, $\mathcal{H}$ has a finite  faithful module.   If $Q(\partial,\lambda)$ is non-trivial, we claim that   $\mathcal{H}$ is centerless.
    Suppose that $f(\partial)A+g(\partial)B \in \mathcal{H}$ is in the center of $\mathcal{H}$.  Then
        \[  [(f(\partial)A+g(\partial)B)_\lambda A]= f(-\lambda)(\partial+2\lambda)A-g(-\lambda)(\partial-a \partial -a\lambda+b)B=0.  \]
      Thus, $f(\partial)=g(\partial)=0$. Hence the regular module is a finite faithful module of $\mathcal{H}$ in this case.
      \end{remark}

    \begin{proposition}{\label{p3}} Set $\mathcal{H}=\mathbb{C}[\partial]A \oplus \mathbb{C}[\partial]B$ as direct sum of ideals, that is $[A_\lambda B]=0$, where $\mathbb{C}[\partial]A$ is the Virasoro Lie conformal algebra. Let $V$ be a  finite  $\mathcal{H}$-module such that $A$ acts non-trivially on $V$.
  Then  there exists $u \in V$ such that  $A_\lambda u=(\partial+\alpha\lambda+\beta)u$ for some $\alpha$, $\beta \in \mathbb{C}$ and $B_\lambda u=0$.
    \end{proposition}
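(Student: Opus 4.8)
The plan is to produce $u$ as the $\mathbb{C}[\partial]$-generator of a rank one $\mathbb{C}[\partial]A$-submodule of $V$, and then to show that the commuting $B$-action is automatically forced to annihilate such a generator. First I record the compatibility that drives everything: since $\mathcal{H}=\mathbb{C}[\partial]A\oplus\mathbb{C}[\partial]B$ is a direct sum of ideals we have $[A_\lambda B]=0$, and the module axiom of Definition~\ref{d2} gives, for all $v\in V$,
\[
A_\mu(B_\lambda v)=[A_\mu B]_{\mu+\lambda}v+B_\lambda(A_\mu v)=B_\lambda(A_\mu v),
\]
so the $A$- and $B$-actions conformally commute. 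Restricting the $\mathcal{H}$-action to $\mathbb{C}[\partial]A\cong Vir$, $V$ becomes a finite $Vir$-module on which $A$ acts non-trivially.

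Next I claim $V$ contains a non-zero element $w$ with $A_\lambda w=(\partial+\alpha\lambda+\beta)w$ for some $\alpha,\beta\in\mathbb{C}$. If $\mathrm{Tor}(V)\neq 0$, then by \cite[Lemma~8.2]{DK} it is a non-zero trivial $\mathcal{H}$-submodule; a $\partial$-eigenvector $w$ in it ($\partial w=cw$) then satisfies $A_\lambda w=0=(\partial+0\cdot\lambda-c)w$ and $B_\lambda w=0$, so $u:=w$ already works. If $\mathrm{Tor}(V)=0$, so $V$ is free over $\mathbb{C}[\partial]$, then the classification of finite irreducible $Vir$-modules (Proposition~\ref{t2.5}) together with the structure theory of finite $Vir$-modules from \cite{CK} shows that $V$ contains a submodule isomorphic to some $M_{\alpha,\beta}$; since $M_{0,\beta}$ contains $(\partial+\beta)M_{0,\beta}\cong M_{1,\beta}$, we may moreover assume $\alpha\neq 0$. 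Let $w$ be the image of the generator, so $A_\lambda w=(\partial+\alpha\lambda+\beta)w$ with $\alpha\neq 0$.

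Now I show $B_\lambda w=0$, which finishes the proof with $u:=w$. From conformal commutativity and the relation $B_\lambda(\partial w)=(\partial+\lambda)B_\lambda w$,
\[
A_\mu(B_\lambda w)=B_\lambda\big((\partial+\alpha\mu+\beta)w\big)=(\partial+\lambda+\alpha\mu+\beta)(B_\lambda w).
\]
Hence for each fixed $\lambda_0\in\mathbb{C}$ the vector $x:=B_{\lambda_0}w\in V$ satisfies $A_\mu x=(\partial+\alpha\mu+(\lambda_0+\beta))x$. If $x\neq 0$, then since $\alpha\neq 0$ the cyclic submodule $\mathbb{C}[\partial]x$ carries a non-trivial $A$-action, hence is not contained in $\mathrm{Tor}(V)$, hence is free of rank one and isomorphic to the irreducible module $M_{\alpha,\lambda_0+\beta}$ (Proposition~\ref{t2.5}). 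As $M_{\alpha,\gamma}\not\cong M_{\alpha,\gamma'}$ for $\gamma\neq\gamma'$, if $B_{\lambda_0}w\neq 0$ for all $\lambda_0$ in a set $S\subseteq\mathbb{C}$ then $V$ contains the internal direct sum $\bigoplus_{\lambda_0\in S}M_{\alpha,\lambda_0+\beta}$ of pairwise non-isomorphic irreducible submodules, so $|S|\le\operatorname{rank}_{\mathbb{C}[\partial]}V<\infty$. But $\lambda\mapsto B_\lambda w$ is a polynomial with coefficients in the vector space $V$ (conformality of the module), so a non-zero such polynomial is non-zero at all but finitely many $\lambda_0$; this contradicts $|S|<\infty$ unless $B_\lambda w=0$.

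The step I expect to be delicate is the third one: one must carefully justify that distinct $M_{\alpha,\gamma}$ are non-isomorphic, that a sum of pairwise non-isomorphic irreducible submodules of $V$ is direct (so that the rank bound truly applies), and that $\mathbb{C}[\partial]x$ is genuinely torsion-free. A secondary obstacle is the structure-theoretic input of the second step — that a non-zero torsion-free finite $Vir$-module on which $A$ acts non-trivially contains a copy of some $M_{\alpha,\beta}$. I would obtain this from \cite{CK} and Proposition~\ref{t2.5} by passing to $N:=V/\{v\in V:A_\lambda v=0\}$, which is again torsion-free, has non-trivial $A$-action and no non-zero trivial submodule, and so possesses an irreducible $Vir$-submodule $\cong M_{\alpha,\beta}$ with $\alpha\neq 0$; transporting this copy back into $V$ (splitting off the trivial part $\{v:A_\lambda v=0\}$ of the resulting extension, using $\alpha\neq 0$) is the remaining bookkeeping.
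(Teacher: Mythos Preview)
Your argument is correct, but the paper's proof of the crucial step $B_\lambda u=0$ is far more elementary than yours. After obtaining $u$ with $A_\lambda u=(\partial+\alpha\lambda+\beta)u$, the paper writes $B_\lambda u=\sum_i u_i\lambda^i$ and applies only the single operator $A_{(0)}$ (i.e.\ the $\mu^0$-coefficient of $A_\mu$) together with commutativity:
\[
\sum_i (A_{(0)}u_i)\lambda^i \;=\; A_{(0)}(B_\lambda u)\;=\;B_\lambda(A_{(0)}u)\;=\;B_\lambda\bigl((\partial+\beta)u\bigr)\;=\;(\partial+\lambda+\beta)\sum_i u_i\lambda^i.
\]
The right-hand side has strictly higher $\lambda$-degree than the left unless the sum is empty, so $B_\lambda u=0$ immediately. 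This degree count needs no hypothesis on $\alpha$, no identification of $\mathbb{C}[\partial](B_{\lambda_0}w)$ with an irreducible $M_{\alpha,\lambda_0+\beta}$, no non-isomorphism statement for the family $M_{\alpha,\gamma}$, and no direct-sum/rank bound. By contrast, your route requires first arranging $\alpha\neq 0$ (so that the cyclic modules are irreducible and the direct-sum lemma applies), which in turn forces the extra bookkeeping you flag in your final paragraph. What your approach buys is a pleasant structural picture---each specialization $B_{\lambda_0}w$ lands in a distinct isotypic piece---but for the bare existence statement here that machinery is unnecessary.

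On the preliminary step (existence of $u$ with $A_\lambda u=(\partial+\alpha\lambda+\beta)u$), you are actually more careful than the paper, which simply invokes Proposition~\ref{t2.5}. Your splitting/torsion discussion is a reasonable way to fill that gap; alternatively one can run the same extended-annihilation-algebra and Lie's Theorem argument the paper uses in Proposition~\ref{p1}, restricted to the $Vir$ factor.
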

     \begin{proof}
   Consider $V$ as a  $\mathbb{C}[\partial]A$-module. Thus by Proposition \ref{t2.5}, there exists $u \in V$ such that  $A_\lambda u=(\partial+\alpha\lambda+\beta)u$ for some $\alpha$, $\beta \in \mathbb{C}$. Assume that  $B_\lambda u=\sum\limits_{i \in \mathbb{Z}_+} u_i\lambda^i$ where $u_i \in V$.
 Then
       \begin{equation}\label{elast}
       A_{(0)}(B_\lambda u)=\sum\limits_{i \in \mathbb{Z}_+}(A_{(0)}\cdot u_i)\lambda^i=  B_\lambda(A_{(0)}\cdot u)=(\partial+\lambda+\beta)\sum\limits_{i \in \mathbb{Z}_+} u_i\lambda^i.
       \end{equation}
     By comparing the coefficients of $\lambda$ of Equation (\ref{elast}), we have $B_\lambda u=0$ which implies that $\mathbb{C}[\partial]u$ is an $\mathcal{H}$-module.
\end{proof}

\begin{proposition}{\label{p1}} Suppose that $\mathcal{H}$ is the Lie conformal algebra defined in   (\ref{ee1}) with  $\delta=1$ and  $V$ is a  non-trivial finite  $\mathcal{H}$-module. Then there  exists a nonzero element $u\in V$ such that $\mathbb{C}[\partial]u$ is an $\mathcal{H}$-submodule of rank one as a $\mathbb{C}[\partial]$-module. In particular, if $V$ is irreducible, then $V=\mathbb{C}[\partial]u$.
\end{proposition}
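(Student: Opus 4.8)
The plan is to produce a single nonzero vector $u\in V$, not lying in $\text{Tor}(V)$, which is an $A$-weight vector, i.e. $A_\lambda u=(\partial+\alpha\lambda+\beta)u$ for some $\alpha,\beta\in\mathbb{C}$, and which is annihilated by $B$, i.e. $B_\lambda u=0$. Granting this, $\mathbb{C}[\partial]u$ is an $\mathcal{H}$-submodule that is not a torsion module, hence is free of rank one over $\mathbb{C}[\partial]$, and if $V$ is irreducible then $V=\mathbb{C}[\partial]u$. Two preliminary observations come first. First, $A$ acts non-trivially on $V$: otherwise the module identity $A_\mu(B_\lambda v)=[A_\mu B]_{\mu+\lambda}v+B_\lambda(A_\mu v)$, combined with $[A_\lambda B]=(\partial+a\lambda+b)B$, gives $\bigl(a\mu+b-(\mu+\lambda)\bigr)(B_{\mu+\lambda}v)=0$ for all $v$; writing $\nu=\mu+\lambda$, the polynomial $a\mu+b-\nu$ is not identically zero, so $B_\nu v=0$ for all $v$, contradicting non-triviality. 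Second, $V^{B}:=\{v\in V\mid B_\lambda v=0\}$ is an $\mathcal{H}$-submodule on which $A$ acts as a genuine Virasoro element: since $\mathbb{C}[\partial]B$ is an abelian ideal, each $[A_{(m)},B_{(n)}]$ and each $[B_{(m)},B_{(n)}]=0$ lies in $\text{span}_\mathbb{C}\{B_{(k)}\}$, while $B_{(n)}(\partial v)=\partial(B_{(n)}v)+nB_{(n-1)}v$, so $V^{B}$ is stable under $\partial$ and under all $A_{(m)}$ and $B_{(m)}$, and on $V^{B}$ the summand $Q(\partial,\lambda)B$ of $[A_\lambda A]$ vanishes. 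Hence, as soon as $V^{B}\neq 0$, Proposition~\ref{t2.5} (applied as in the proof of Proposition~\ref{p3}) produces the desired $u$ inside $V^{B}$; and when $V$ is irreducible, $V^{B}\in\{0,V\}$, so $V^{B}=V$ makes $V$ an irreducible $Vir$-module and $V=M_{\alpha,\beta}$.

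Everything thus reduces to locating an $A$-weight vector and controlling the $B$-action on it. When $Q=0$ — equivalently, since $\delta=1$, when $b\neq 0$ — the element $A$ already makes $V$ a non-trivial finite $Vir$-module, which therefore contains a non-torsion $A$-weight vector $u$ with $A_\lambda u=(\partial+\alpha\lambda+\beta)u$. Setting $f(\lambda):=B_\lambda u\in V[\lambda]$, the identity $A_\mu(B_\lambda u)=[A_\mu B]_{\mu+\lambda}u+B_\lambda(A_\mu u)$ becomes
\begin{equation*}
A_\mu\bigl(f(\lambda)\bigr)=\bigl((a-1)\mu+b-\lambda\bigr)\,f(\mu+\lambda)+(\partial+\lambda+\alpha\mu+\beta)\,f(\lambda),
\end{equation*}
and from its coefficients in $\mu$ one reads off that, whenever $f\neq 0$, the leading coefficient of $f$ is again an $A$-weight vector, of weight $(\alpha+a-1-\deg f,\ \beta+b)$. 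I would then iterate the passage $u\mapsto(\text{leading coefficient of }B_\lambda u)$: non-torsion weight vectors with pairwise distinct ``$\beta$-parts'' are eigenvectors of the $\mathbb{C}[\partial]$-linear operator $A_{(0)}-\partial$ for distinct scalar eigenvalues, hence become linearly independent over $\mathbb{C}(\partial)$, so the finite-rank module $V$ carries only finitely many of them; since $b\neq 0$, the $\beta$-parts $\beta,\beta+b,\beta+2b,\dots$ arising along the iteration are all distinct, so it must terminate, and it can only terminate at a weight vector killed by $B$ (the desired $u$) or at a torsion weight vector, the latter spanning a one-dimensional trivial $\mathcal{H}$-submodule, which is excluded once $V$ is irreducible. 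This disposes of the case $Q=0$.

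The case $b=0$, where $\mathcal{H}$ is one of the finitely many explicit algebras of Proposition~\ref{pp1}, is where I expect the real work, for two reasons. First, if $Q\neq 0$ then $A$ is no longer a Virasoro element on $V$, so the weight vector has to be extracted from $V^{B}$ (after showing $V^{B}\neq 0$) or from the annihilation Lie algebra $\text{Lie}(\mathcal{H})^{e}$ of Proposition~\ref{t2}, which modulo its abelian ideal $\text{span}_\mathbb{C}\{B_{(k)}\}$ is $\text{Lie}(Vir)^{e}$ with $A_{(1)}$ in the role of the grading operator. Second, in the displayed identity the factor $(a-1)\mu+b-\lambda$ degenerates to $(a-1)\mu-\lambda$: for $a\neq 1$ one can still force $f\equiv 0$ once $B_{(0)}u=0$, using the $\mathbb{C}[\partial]$-linearity of $B_{(0)}$ and the relation $[A_{(1)},B_{(0)}]=(a-1)B_{(0)}$ to make $B_{(0)}$ nilpotent; but for $a=1,\ b=0$ the factor is merely $-\lambda$, the $\lambda^{0}$-coefficient yields no more than that $B_{(0)}u$ is an $A$-weight vector of the same weight as $u$, and one is forced to solve the entire recursion coming from all the $\lambda$-coefficients of the displayed identity, together with $[B_\lambda B]=0$ and the explicit form of $Q(\partial,\lambda)$ (a scalar multiple of $2\lambda+\partial$), in order to conclude $B_\lambda u=0$. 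This subcase $a=1,\ b=0$ — at once the one where a weight vector is hardest to obtain and the one where the Jacobi identity yields its conclusion least directly — is the main obstacle; its treatment runs parallel to computations already carried out in the literature for $\mathcal{W}(a,b)$ and related Lie conformal algebras.
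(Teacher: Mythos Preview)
Your proposal has a genuine gap: it is not a proof but an outline that explicitly leaves the decisive case unfinished. You acknowledge that for $b=0$ (in particular $a=1$, $b=0$) ``the real work'' remains, and you defer it to ``computations already carried out in the literature''. That is precisely the case the proposition must cover, and no argument is given. Even your $b\neq 0$ iteration is not quite closed: the chain of leading coefficients may terminate at a torsion vector, which you rule out only when $V$ is irreducible, whereas the proposition asserts the existence of a free rank-one submodule for \emph{every} non-trivial finite $V$. (Also, ``$Q=0$ iff $b\neq 0$'' is false: for $\delta=1$ and $b=0$ one still has $Q=0$ for all $a\notin\{1,0,-1,-4,-6\}$ and for the special $a$'s when the free parameters vanish.)

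More importantly, you are aiming at an unnecessarily strong target. To get a rank-one submodule $\mathbb{C}[\partial]u$ you do not need $B_\lambda u=0$; it suffices that $A_\lambda u=(\partial+p(\lambda))u$ and $B_\lambda u=f(\lambda)u$ for some polynomials $p,f$. The paper obtains this in one stroke, uniformly in $a,b,Q$, by passing to the extended annihilation algebra $\text{Lie}(\mathcal{H})^e$: one chooses a filtration $\mathcal{L}_p=\text{span}_\mathbb{C}\{A_{(s+d)},B_{(s)}\mid s\ge p\}$ (with $d$ the total degree of $Q$), takes the finite-dimensional space $U=\{v\in V\mid \mathcal{L}_N v=0\}$ for the minimal such $N$, and observes that the subalgebra $\mathcal{N}=\mathbb{C}(\partial-A_{(0)})\oplus\sum_{i=1}^{d-1}\mathbb{C}A_{(i)}\oplus\mathcal{L}_0$ normalizes each $\mathcal{L}_p$ and that $\mathcal{N}/\mathcal{L}_N$ is solvable. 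Lie's theorem then produces a common eigenvector $u\in U$, giving simultaneously $A_\lambda u=(\partial+p(\lambda))u$ and $B_\lambda u=f(\lambda)u$. This bypasses the entire case analysis you set up; the further reduction to $B_\lambda u=0$ (or $f=\gamma$ constant) that you are struggling with is carried out \emph{afterwards}, in Theorem~\ref{tf}, using only the module axioms on a rank-one module.
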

 \begin{proof}  If $Q(\partial,\lambda)$ is not a constant, let us use   $d$  to denote  the total degree of $Q(\partial,\lambda)$.  Otherwise, we let $d=1$.

  Then the corresponding  extended annihilation Lie algebra $\text{Lie}(\mathcal{H})^e=\mathbb{C}\partial \oplus \sum_{n \geq 0}\mathbb{C}A_{(n)} \oplus  \sum_{n \geq 0}\mathbb{C}B_{(n)}$, whose brackets  are determined by
      \begin{equation}{\label{e1}}
   \begin{aligned}
    &[A_{(m)},A_{(n)}]=(m-n)A_{(m+n-1)}+\sum \limits_{i=0}^d a_i B_{(m+n-i)}, \\
    &[A_{(m)},B_{(n)}]=((a-1)m-n)B_{(n+m-1)}+ bB_{(n+m)},    \\
   &[\partial,A_{(n)}]=-nA_{(n-1)}, \ \ [\partial,B_{(n)}]=-nB_{(n-1)},\ \ [B_{(m)},B_{(n)}]=0,
   \end{aligned}
   \end{equation}
   where   $a_i \in \mathbb{C}$ for $0\leq i \leq d$ depending on   $Q(\partial,\lambda)$.

Define $\mathcal{L}_p = \text{span}_\mathbb{C}\{A_{(s+d)}
, B_{(s)}|s \geq p \}$ for $p \geq 0$. Since $V$
is a non-trivial conformal $\mathcal{L}_{0}$-module, by Proposition $\ref{t2}$, there exists a minimal
integer $N \geq 0$ such that $U := \{v \in V | \mathcal{L}_N\cdot v=0\} \neq \{0\}$.
 In addition, $U$
is  finite dimensional by Lemma 14.4 of \cite{BDK}. Let $\mathcal{N}=\mathbb{C}(\partial-A_{(0)}) \oplus \mathcal{L}_0 \oplus \sum\limits_{i=1}^{d-1}\mathbb{C}A_{(i)}$.  Then $\mathcal{N}$ is a subalgebra of $\text{Lie}(\mathcal{H})^e$.
Moreover, we have $[\mathcal{N},\mathcal{L}_p]\subset \mathcal{L}_p$ for any $p \geq 0$. Thus $U \subset V$ is a nonzero non-trivial finite dimensional  $\mathcal{N}/\mathcal{L}_{N}$-module.
      Note that  \[\mathcal{N}^{(1)}:=[\mathcal{N},\mathcal{N}]\subset \mathcal{L}_0 \oplus \sum\limits_{i=1}^{d-1}\mathbb{C}A_{(i)}.\]
      Define inductively $\mathcal{N}^{(i+1)}:=[ \mathcal{N}^{(i)},\mathcal{N}^{(i)}]$. Then
    $  \mathcal{N}^{(d)} \subset \mathcal{L}_0$. Thus $\mathcal{N}/\mathcal{L}_N$ is a solvable Lie algebra. Therefore, by {\it Lie's Theorem}, there exists a nonzero element  $u\in U$ and a linear function $\xi$ on $\mathcal{N}$ such that $A_{(0)}\cdot u=(\partial+\beta)u$ for some $\beta \in \mathbb{C}$, $A_{(i)}\cdot u=\xi(A_{(i)})u$ for $i>0$ and                                      $B_{(j)} \cdot u=\xi(B_{(j)})u$ for $j\geq 0$.
   Thus  $A_\lambda u=(\partial+p(\lambda))u$ and $B_\lambda u=f(\lambda)u$ for some $p(\lambda)$, $f(\lambda) \in \mathbb{C}[\lambda]$. Hence, $\mathbb{C}[\partial]u$ is a free rank one  $\mathcal{H}$-submodule of $V$.
     \end{proof}

 By  Propositions {\ref{p3}} and Proposition {\ref{p1}}, for any free rank two Lie conformal algebra $\mathcal{H}$, it is easy to see that any  finite non-trivial irreducible module of $\mathcal{H}$ must be free of rank one.

 More explicitly, we have the following theorem as the main results of this paper.

 \begin{theorem}{\label{tf}}  Suppose that $\mathcal{H}=\mathbb{C}[\partial]A\oplus \mathbb{C}[\partial]B$ is a Lie conformal algebra of rank two. Then any non-trivial finite irreducible $\mathcal{H}$-module is free of rank one. Moreover,  if  $V=\mathbb{C}[\partial]v$ is a non-trivial irreducible $\mathcal{H}$-module, then  the action of $\mathcal{H}$ on $V$ has to be  one of the  following cases:
     \begin{enumerate}[fullwidth,itemindent=2em]
       \item[(i)]If $\mathcal{H}$ is solvable with the relations (\ref{312}), then we have
       $ A_\lambda v=\phi_A(\lambda)v$, $B_\lambda v=\phi_B(\lambda)v$,
where $\phi_A(\lambda)$, $\phi_B(\lambda)$ are not zero  simultaneously.
Moreover, $\phi_B(\lambda) \neq 0 $ only if $P_1(\partial,\lambda)=Q_1(\partial,\lambda)=0$.
       \item[(ii)]  Suppose that $\mathcal{H}$ is the Lie conformal algebra defined in   (\ref{ee1}) with  $\delta=1$. Then
       \[ A_\lambda v=(\partial+\alpha\lambda+\beta),\ B_\lambda v=\gamma v,\]
where $\alpha, \beta, \gamma \in \mathbb{C}$ such that $\gamma \neq 0$ only if $a=1$, $b=0$ and $Q(\partial,\lambda)=0$. Further, if $\gamma=0$, then $\alpha \neq 0$.
       \item[(iii)]Suppose that $\mathcal{H}$ is the Lie conformal algebra defined in   (\ref{ee1}) with  $\delta=0$, then either \[ A_\lambda v=(\partial+\alpha\lambda+\beta)v,\ B_\lambda v=0, \  \text{for}\ \text{some}  \   \beta,\  0 \neq\alpha \in \mathbb{C},\]
       or
       \[ A_\lambda v=0,\ B_\lambda v=\phi(\lambda)v, \  \text{for}\ \text{some}\ \text{nonzero}\  \phi(\lambda)\in \mathbb{C}[\lambda].\]
        \item[(iv)] If $\mathcal{H}=\mathbb{C}[\partial]A \oplus \mathbb{C}[\partial]B$ is a direct sum of two   Virasoro Lie conformal algebras with $[A_\lambda B]=0$,  then  either
        \[ A_\lambda v=(\partial+\alpha_1\lambda+\beta_1)v,\ B_\lambda v=0, \  \text{for}\ \text{some}  \   \beta_1,\  0 \neq\alpha_1 \in \mathbb{C},\]
       or
       \[ A_\lambda v=0,\ B_\lambda v=(\partial+\alpha_2\lambda+\beta_2)v, \    \text{for}\ \text{some}  \   \beta_2,\  0 \neq\alpha_2 \in \mathbb{C}.\]
        \end{enumerate}
\end{theorem}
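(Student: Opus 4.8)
The plan is to prove the structural statement — that every non-trivial finite irreducible $\mathcal{H}$-module is free of rank one — by reducing to the four cases of Proposition \ref{pp1} together with the semisimple case, and then to determine the explicit action in each case. The first claim is essentially already in hand: Proposition \ref{p3} handles the split situation $[A_\lambda B]=0$ (both the case where $\mathbb{C}[\partial]A$ is Virasoro and $B$ is central, and, after relabelling, the direct sum of two Virasoros), while Proposition \ref{p1} handles the case \eqref{ee1} with $\delta=1$; the remaining cases ($\mathcal{H}$ solvable as in \eqref{312}, and \eqref{ee1} with $\delta=0$) are where $B$ generates an abelian ideal acting by polynomials. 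In all of these, once we exhibit a nonzero $u\in V$ with $\mathbb{C}[\partial]u$ a rank-one submodule, irreducibility forces $V=\mathbb{C}[\partial]u$. So the core of the argument is: (a) produce such a $u$ in the two still-open cases, and (b) in every case pin down exactly which pairs $(\phi_A,\phi_B)$, resp. $(\alpha,\beta,\gamma)$, can actually occur, using skew-symmetry and the Jacobi/module identities.

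For part (a) in the solvable case \eqref{312}, I would pass to the extended annihilation Lie algebra and argue as in Proposition \ref{p1}: $B$ spans an abelian ideal, so $\mathrm{Lie}(\mathcal{H})^+$ is solvable; choosing the minimal $N$ with $U=\{v : \mathcal{L}_N\cdot v = 0\}\neq 0$ (finite-dimensional by \cite[Lemma 14.4]{BDK}), $U$ is a finite-dimensional module over a solvable Lie algebra, so by Lie's Theorem there is a common eigenvector $u$, giving $A_\lambda u = \phi_A(\lambda)u$ and $B_\lambda u = \phi_B(\lambda)u$ — note that here $A$ need not act via the Virasoro-type $(\partial+\alpha\lambda+\beta)$ since $[A_\lambda A]$ is not the Virasoro bracket, so we only get $A_\lambda u = \phi_A(\lambda)u$. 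The $\delta=0$ case of \eqref{ee1} is the sub-case $P_1=\delta(\partial+a\lambda+b)=0$ of this, except that now $[A_\lambda A]=(\partial+2\lambda)A + Q(\partial,\lambda)B$ genuinely involves the Virasoro term, so one splits according to whether $A$ acts trivially or not: if $A$ acts non-trivially, restrict to $\mathbb{C}[\partial]A\cong Vir$ and use Proposition \ref{t2.5} to get $A_\lambda u=(\partial+\alpha\lambda+\beta)u$, then run the argument of Proposition \ref{p3} (compute $A_{(0)}(B_\lambda u)$ two ways) to force $B_\lambda u=0$; if $A$ acts trivially, then $V$ is a module over $\mathcal{H}/(\mathbb{C}[\partial]A)$, a rank-one abelian (current-type) conformal algebra, whose finite irreducibles are again rank one with $B$ acting by a polynomial.

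For part (b), the determination of the admissible parameters: in the $\delta=1$ case, plug $A_\lambda v = (\partial+\alpha\lambda+\beta)v$, $B_\lambda v=\gamma v$ into $A_\lambda(B_\mu v) = [A_\lambda B]_{\lambda+\mu}v + B_\mu(A_\lambda v)$; the left side is $(\partial+\alpha\lambda+\beta+\ldots)\gamma v$ evaluated appropriately and the identity $[A_\lambda B]=(\partial+a\lambda+b)B$ gives a polynomial identity in $\lambda,\mu$ forcing, when $\gamma\neq0$, the relations $a=1$, $b=0$; then feeding $A_\lambda(A_\mu v)$ through the Jacobi identity with $[A_\lambda A]=(\partial+2\lambda)A+Q(\partial,\lambda)B$ kills $Q$. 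The constraint "$\gamma=0 \Rightarrow \alpha\neq0$" is just Proposition \ref{t2.5} applied to the restriction to the Virasoro subalgebra $\mathbb{C}[\partial]A$ — if $\alpha=0$ and $\gamma=0$ then $V=M_{0,\beta}$ is reducible. For the solvable case the constraint "$\phi_B\neq0$ only if $P_1=Q_1=0$" comes from: if $\phi_B\neq0$ somewhere, comparing $A_\lambda(B_\mu v)$ computed via $[A_\lambda B]=P_1(\partial,\lambda)B$ forces $P_1(-\lambda-\mu,\lambda)\phi_B(\mu)=0$ identically, hence $P_1=0$, and then similarly $[A_\lambda A]=Q_1(\partial,\lambda)B$ through the module axiom (using $[A_\lambda,A_\mu]v = \ldots$) forces $Q_1(-\lambda-\mu,\lambda)\phi_B(\lambda-\mu)\cdot(\text{nonzero})=0$, hence $Q_1=0$; the "not both zero" is just non-triviality. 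The main obstacle I anticipate is purely bookkeeping: in case (iii) with $Q\neq0$ one must carefully verify that $A$ acting non-trivially is consistent with $B_\lambda u = 0$ — i.e., that $Q(\partial,\lambda)$ does not obstruct the Proposition \ref{p3}-style argument — which requires checking that $[A_\lambda A]v$ computed via the module axiom and via $(\partial+2\lambda)A+Q(\partial,\lambda)B$ agree, forcing $Q(-\lambda-\mu,\lambda)\cdot 0 = 0$ trivially once $B_\lambda u=0$ is known; establishing $B_\lambda u=0$ first (before worrying about $Q$) is the right order. Everything else is routine polynomial identity chasing.
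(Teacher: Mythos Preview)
Your proposal is correct and follows essentially the same route as the paper: reduce to the four structural cases via Proposition~\ref{pp1}, invoke Propositions~\ref{p3} and~\ref{p1} to produce a rank-one submodule (hence equality by irreducibility), and then pin down the parameters by plugging into the module axioms. Two small remarks are worth making. First, your ``main obstacle'' in case~(iii) evaporates: Proposition~\ref{pp1} explicitly states that $Q(\partial,\lambda)=0$ whenever $\delta=0$, so in that case $\mathcal{H}$ really is a direct sum of ideals and Proposition~\ref{p3} applies without modification; there is no $Q$ to worry about. Second, in case~(ii) your phrase ``kills $Q$'' is where the paper is slightly sharper: the module identity for $[A_\lambda A]$ yields $(\lambda-\mu)p(\lambda+\mu)+\gamma\,Q(-\lambda-\mu,\lambda)=\lambda p(\lambda)-\mu p(\mu)$, which says precisely that $Q$ is a $2$-coboundary in $C^2(Vir,M_{1,0})$; since the nonzero $Q$'s in the table of Proposition~\ref{pp1} are chosen to represent nontrivial cohomology classes, this forces $Q=0$ when $\gamma\neq 0$. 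Apart from these clarifications, your argument matches the paper's.
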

\begin{proof}
\begin{enumerate}[fullwidth,itemindent=2em]
    \item[($i$)] It is clear from the definition and  Theorem 8.4 of \cite{DK}.
   \item[($ii$)] If $\mathcal{H}$ is neither solvable nor semisimple, then by Proposition {\ref{p1}}, $A_\lambda v=(\partial+p(\lambda))v$ and $B_\lambda v=f(\lambda)v$ for some $p(\lambda)$, $f(\lambda) \in \mathbb{C}[\lambda]$.
    Plugging these into the equations:
        \begin{equation}
        \begin{aligned}
        &[A_\lambda B]_{\lambda+\mu}v=A_\lambda B_\mu v- B_\mu A_\lambda v,\\
        &[A_\lambda A]_{\lambda+\mu}v=A_\lambda A_\mu v- A_\mu A_\lambda v,
        \end{aligned}
\end{equation}
   we have
        \begin{equation}{\label{e22}}
        \begin{array}{l}
        f(\lambda+\mu)(-\lambda-\mu+a\lambda+b)=-\mu f(\mu),
        \end{array}
\end{equation}
 \begin{equation}{\label{e23}}
        \begin{array}{l}
       (\lambda-\mu)p(\lambda+\mu)+Q(-\lambda-\mu,\lambda) f(\lambda+\mu)=\lambda p(\lambda)-\mu p(\mu).
        \end{array}
\end{equation}
    If $f(\lambda)\neq 0$, Equation (\ref{e22}) forces that  $a=1$, $b=0$ and $f(\lambda)=\gamma$ for some nonzero  $\gamma \in \mathbb{C}$. In this case, $Q(\lambda,\mu)=0$. Otherwise,  Equation (\ref{e23}) implies that $Q(\lambda,\mu)$  is a 2-coboundary  in $C^2(Vir,M_{1,0})$, which contradicts to the construction of $Q(\lambda,\mu)$. Thus, in any case,  $Q(-\lambda-\mu,\lambda) f(\lambda+\mu)=0$. The remainder of (ii)  is obvious from Proposition \ref{t2.5}.
   \item[($iii$) and ($iv$)] are immediate results by   Proposition {\ref{p3}}.

   \end{enumerate}
   \end{proof}
   As an application, we have the following theorem.
 \begin{theorem}{\label{bcc29}}
 Let $M$ be a free finite Lie conformal algebra such that $M/\text{Rad}(M) = Vir$.  Then any non-trivial finite $M$-module
$V$ must contain a free rank one $M$-submodule.
\end{theorem}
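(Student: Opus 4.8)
The plan is to adapt the argument of Proposition~\ref{p1} to the extended annihilation Lie algebra of $M$. First I would fix a $\mathbb{C}[\partial]$-module splitting $M=\mathbb{C}[\partial]L+\text{Rad}(M)$, available because $M/\text{Rad}(M)\cong Vir$ is free over $\mathbb{C}[\partial]$, with $L$ a lift of the canonical generator of $Vir$ and $\mathcal B$ a fixed $\mathbb{C}[\partial]$-basis of $\text{Rad}(M)$; then $[L_\lambda L]=(\partial+2\lambda)L+Q$ with $Q\in\text{Rad}(M)[\lambda]$, while $[L_\lambda b]\in\text{Rad}(M)[\lambda]$ and $[b_\lambda b']\in\text{Rad}(M)[\lambda]$ for $b,b'\in\mathcal B$. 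I would record at once that $\text{Lie}(\text{Rad}(M))^{+}$ is solvable, since its $k$-th derived subalgebra lies in $\text{Lie}((\text{Rad}(M))^{(k)})^{+}$, which vanishes for $k$ large. Writing $\mathfrak g=\text{Lie}(M)^{e}$, $L_m=L_{(m+1)}$ ($m\ge-1$) and $\mathfrak r=\text{Lie}(\text{Rad}(M))^{+}=\text{span}_{\mathbb C}\{b_{(n)}\mid b\in\mathcal B,\ n\ge 0\}$, the subspace $\mathfrak r$ is a solvable ideal of $\mathfrak g$ with $\mathfrak g/\mathfrak r\cong\text{Lie}(Vir)^{e}$, inside which $\text{ad}\,\partial$ and $\text{ad}\,L_{-1}$ coincide; hence $\partial_0:=\partial-L_{(0)}$ satisfies $[\partial_0,\mathfrak g]\subseteq\mathfrak r$.

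Next I would isolate a finite-dimensional ``cusp space''. Choose $N_0$ larger than the degrees in $\partial$ and in $\lambda$ of $[L_\lambda L]$ and of every $[L_\lambda b]$ ($b\in\mathcal B$), and for $p\ge 0$ set $\mathcal L_p=\text{span}_{\mathbb C}\{L_m\mid m\ge p+N_0\}+\text{span}_{\mathbb C}\{b_{(n)}\mid b\in\mathcal B,\ n\ge p\}$. Each $\mathcal L_p$ is a subalgebra of $\mathfrak g$ and $\mathcal L_0\supseteq\mathcal L_1\supseteq\cdots$. Since $V$ is a non-trivial conformal $\mathfrak g$-module, conformality gives $V=\bigcup_{p\ge 0}\{v\in V\mid\mathcal L_p v=0\}$, so there is a least $N\ge 0$ with $U:=\{v\in V\mid\mathcal L_N v=0\}\neq 0$, and $U$ is finite-dimensional by Lemma~14.4 of~\cite{BDK}.

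I would then set $\mathcal N=\mathbb C\partial_0+\mathfrak r+\text{span}_{\mathbb C}\{L_m\mid m\ge 0\}$ and verify that $\mathcal N$ is a subalgebra of $\mathfrak g$ containing $\mathcal L_0$, that $[\mathcal N,\mathcal L_N]\subseteq\mathcal L_N$ (so $U$ is $\mathcal N$-invariant), and that $\mathcal N/\mathcal L_N$ is solvable: it carries the solvable ideal $\mathfrak r/(\mathfrak r\cap\mathcal L_N)$, and the corresponding quotient $\mathbb C\overline{\partial_0}+\text{span}_{\mathbb C}\{\overline{L_0},\dots,\overline{L_{N+N_0-1}}\}$ is solvable because $\overline{\partial_0}$ is central while the relations $[\overline{L_i},\overline{L_j}]=(i-j)\overline{L_{i+j}}$ together with the bound on indices make the derived series of $\text{span}_{\mathbb C}\{\overline{L_i}\}$ terminate. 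Lie's Theorem then provides a nonzero $u\in U$ and a linear functional $\xi$ on $\mathcal N$ with $\partial_0u=\xi(\partial_0)u$, $L_mu=\xi(L_m)u$ for $m\ge 0$, and $b_{(n)}u=\xi(b_{(n)})u$ for all $n\ge 0$ (the generators of large index acting as $0$ on $u\in U$). Reassembling these equalities, $L_{(0)}u=(\partial-\xi(\partial_0))u$ and $L_{(m)}u$ is a scalar multiple of $u$ for $m\ge 1$, vanishing for $m$ large, so $L_\lambda u=(\partial+p(\lambda))u$ for some $p\in\mathbb C[\lambda]$, and $b_\lambda u=f_b(\lambda)u$ for some $f_b\in\mathbb C[\lambda]$. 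Therefore $a_\lambda u\in\mathbb C[\partial,\lambda]u$ for every $a\in M$, and $\mathbb C[\partial]u$ is a free rank one $M$-submodule of $V$.

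The main obstacle, and the place where the argument must split into cases, is the inclusion $[\partial_0,\mathcal L_N]\subseteq\mathcal L_N$: for $b\in\mathcal B$ one has $[\partial_0,b_{(n)}]=-n\,b_{(n-1)}-(L_{(0)}b)_{(n)}$, and since $L_{(0)}$ commutes with the action of $\partial$ it is $\mathbb C[\partial]$-linear on $\text{Rad}(M)$, so $(L_{(0)}b)_{(n)}$ may involve annihilation generators of index below $n$; these lower terms cancel $-n\,b_{(n-1)}$ exactly when the matrix of $L_{(0)}$ on $\text{Rad}(M)$ has the form $\partial\cdot\mathrm{id}+C$ with $C$ a constant matrix --- which is what occurs in Proposition~\ref{p1}, where $L_{(0)}B=(\partial+b)B$. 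When this ``leading symbol'' degenerates, one should instead imitate Proposition~\ref{p3}: take, via Proposition~\ref{t2.5}, a vector $u$ with $L_\lambda u=(\partial+\alpha\lambda+\beta)u$, and then force the $\text{Rad}(M)$-action to be scalar on $\mathbb C[\partial]u$ by comparing $\lambda$-coefficients in the identities $L_{(0)}(b_\lambda u)=b_\lambda(L_{(0)}u)$ and their iterates, again producing a free rank one $M$-submodule.
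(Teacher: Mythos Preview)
Your plan transplants the filtration argument of Proposition~\ref{p1} to a general $M$, but the crucial inclusion $[\mathcal N,\mathcal L_N]\subseteq\mathcal L_N$ does not survive. You notice the problem for $\partial_0$, yet the same failure occurs for the generators $L_m$ with small $m\ge 0$: writing $s=m+1$, one has $[L_{(s)},b_{(n)}]=\sum_i\binom{s}{i}(L_{(i)}b)_{(s+n-i)}$, and if $L_{(i)}b$ contains a summand $\partial^{k}c$ with $k\ge 1$ then a term $c_{(s+n-i-k)}$ appears; for $i=s$ and $n=N$ its index is $N-k<N$. In Proposition~\ref{p1} this never arises because $[A_\lambda B]=(\partial+a\lambda+b)B$ is \emph{linear} in $(\partial,\lambda)$, whereas for an arbitrary radical the brackets $[L_\lambda b]$ may have any degree. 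So $U$ is in general not $\mathcal N$-stable and you cannot invoke Lie's theorem on $U$. Your proposed fallback is also incorrect: the identity ``$L_{(0)}(b_\lambda u)=b_\lambda(L_{(0)}u)$'' that you want to compare coefficients in is false unless $L_{(0)}b=0$; the correct relation is $L_{(0)}(b_\lambda u)-b_\lambda(L_{(0)}u)=(L_{(0)}b)_\lambda u$, which reintroduces precisely the higher-$\partial$ terms you were trying to avoid. Proposition~\ref{p3} works only because there $[A_\lambda B]=0$, and nothing analogous is available here.

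The paper avoids these filtration difficulties entirely by a reduction-of-rank argument. It first applies the conformal Lie theorem to the solvable ideal $\text{Rad}(M)$ acting on $V$, obtaining a nonzero weight space $V_\phi=\{v\in V\mid m_\lambda v=\phi(m)\vert_{\partial=-\lambda}\,v\ \text{for all }m\in\text{Rad}(M)\}$; by \cite[Prop.~14.1, Lem.~4.1]{BDK} this generates a free $M$-submodule $\mathbb C[\partial]\otimes V_\phi$, to which one may pass. If the radical acts non-trivially, pick $M_1$ with $\phi(M_1)\neq 0$; then the elements $T_i=\phi(M_i)M_1-\phi(M_1)M_i$ all lie in $\text{ker}(V)$ and span a rank $k-1$ submodule, while $\mathbb C[\partial]L\oplus\mathbb C[\partial]M_1$ meets $\text{ker}(V)$ trivially. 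Hence $M/\text{ker}(V)$ has rank two, and one is reduced to the rank-two case already handled by Propositions~\ref{p3} and~\ref{p1} (equivalently Theorem~\ref{tf}), where the linearity of $[A_\lambda B]$ makes the annihilation-algebra argument go through. In short, the paper uses the radical action first to collapse the problem to rank two, rather than attempting the extended-annihilation argument in full generality.
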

\begin{proof}
Note that $\text{Rad}(M)$ is the radical of $M$. Then by the \emph{conformal Lie's Theorem},
a nonzero eigenspace $V_\phi$ exists, where $V_\phi=\{v \in V| m_\lambda v=(\phi(m)|_{\partial=-\lambda})v, \ \forall m\in \text{Rad}(M) \}$ for some $\phi \in \text{Hom}_{\mathbb{C}[\partial]}(\text{Rad}(M), \mathbb{C}[\partial])$.
By Proposition 14.1 of \cite{BDK}, $V_\phi$ spans an $M$-module, which
is free by Lemma 4.1 of \cite{BDK}.
Hence, we may assume that $V=\mathbb{C}[\partial]\otimes_{\mathbb{C}} V_\phi$.
 Note that $V_\phi$ is a finite dimensional vector space according to that $V$ is a finite  $M$-module.

Thus we may assume that $V=\mathbb{C}[\partial]\otimes_{\mathbb{C}} V_\phi$  for some $\phi \in \text{Hom}_{\mathbb{C}[\partial]}(\text{Rad}(M), \mathbb{C}[\partial])$.
Since $Vir$ is free as a $\mathbb{C}[\partial]$-module, $M =\mathbb{C}[\partial]L \oplus \text{Rad}(M)$ where
\[ [L_\lambda L] = (\partial + 2\lambda)L \ \  \text{mod}\  \text{Rad}(M). \]
Suppose $\{ M_i|1 \leq i \leq k \}$ is a $\mathbb{C}[\partial]$-basis of $\text{Rad}(M)$.
  If $L$ or $\text{Rad}(M)$  acts  trivially on $V$, then the theorem is obvious. Hence let us  assume that
 both $L$ and $M_1$ act non-trivially on $V$. Therefore,  $\phi(M_1)\neq 0$.

Define $\text{ker}(V)=\{a\in M|a_\lambda u=0, \forall u \in
 V\}$. Then $\text{ker}(V)$ is an ideal of $M$. We are
going to prove that $(\mathbb{C}[\partial]L \oplus \mathbb{C}[\partial]M_1) \cap \text{ker}V=\{0\}$.
Assume that there exist  $f(\partial)$ and $g(\partial) \in \mathbb{C}[\partial]$ such that $(f(\partial)L+g(\partial)M_1)_\lambda v=0$ for any $v \in  V_\phi$. On one hand, $f(\partial)=0$
 implies that $g(\partial)=0$. On the other hand, if $f(\partial)\neq 0$, $L_\lambda v\in V_\phi[\lambda]$ which implies that $V_\phi$ is a finite dimensional $M$-module. In this case, $M$ acts trivially on $V_\phi$ which contradicts to our assumption.     As a consequence,  $\text{rank}(M/\text{ker}(V))\geq 2$ and $\text{rank}(\text{ker}(V))\leq k-1$.

 Define $T_i=\phi(M_i)M_1-\phi(M_1)M_i$ for $2\geq i \geq k$. Let $T$ be the $\mathbb{C}[\partial]$-module generated by $\{T_2,T_3, \cdots, T_k \}$. Then $T \subset \text{ker}(V)$ and $\text{rank}(T)=k-1$. Thus $k-1\leq \text{rank}(T)\leq \text{rank}(\text{ker}(V))\leq k-1$. Hence $\text{rank}(M/\text{ker}(V))=2$. Then, this reduces to the study of finite irreducible modules of rank two Lie conformal algebras.
 Now by Theorem {\ref{tf}}, we complete the proof.
  \end{proof}
  \begin{remark}
Since the torsion part of a finite Lie conformal algebra  acts trivially on any module of this Lie conformal algebra, by Theorem \ref{bcc29}, any finite non-trivial irreducible modules of  a finite Lie conformal algebra $M$ satisfying $M/\text{Rad}(M)=Vir$ is free of rank one. Therefore, any finite non-trivial irreducible modules
of Lie conformal algebras such as $\mathcal{W}(a,b)$, Schr\"odinger-Virasoro type Lie  conformal algebras in \cite{LHW} is free of rank one. This result provides a unified method to determine finite irreducible modules of such Lie conformal algebras.
\end{remark}

We finish this paper by computing the  abelian extensions of some free rank two Lie conformal algebras by their non-trivial  free rank one modules. As in Theorem 3.1 of \cite{BKV}, these  abelian extensions can be described in terms of the second cohomology groups.

\begin{proposition}\label{ppp1} Let $\mathcal{H}=\mathbb{C}[\partial]A \oplus \mathbb{C}[\partial]B$ be a  rank two Lie conformal algebra with $\lambda$-brackets:
 \[[A_\lambda A]=(\partial+2\lambda)A,\ \  [A_\lambda B]=0,\ \ [B_\lambda B]=\delta (\partial+2\lambda)B,\ \ \delta \in \{0,1\}.\] Suppose $M_{a,b}=\mathbb{C}[\partial]v$ is a rank one $\mathcal{H}$-module defined as following:
    \[ A_\lambda v=(\partial+a\lambda+b)v,\ \ B_\lambda v=0, \]
    for some $a,b \in \mathbb{C}$. Then  $H^2(\mathcal{H},M_{a,b})\cong H^2(\mathbb{C}[\partial]A,M_{a,b})$.
\end{proposition}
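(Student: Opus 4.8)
The plan is to realise the isomorphism as the map $\rho$ induced on $H^2$ by restriction of cochains to the Virasoro subalgebra $\mathbb{C}[\partial]A\subset\mathcal{H}$, and to prove it is bijective by producing a chain-level splitting and then killing the ``$B$-part'' of a cocycle. Following \cite{BKV}, a $2$-cochain $\gamma$ of $\mathcal{H}$ with values in $M_{a,b}=\mathbb{C}[\partial]v$ is determined by conformal antilinearity from three polynomials defined by $\gamma_{\lambda_1,\lambda_2}(A,A)=P\,v$, $\gamma_{\lambda_1,\lambda_2}(A,B)=Q\,v$, $\gamma_{\lambda_1,\lambda_2}(B,B)=R\,v$, and $d\gamma$ is evaluated on the four generator triples $(A,A,A)$, $(A,A,B)$, $(A,B,B)$, $(B,B,B)$. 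Since $B_\lambda v=0$ and $[A_\lambda B]=0$, in each of the last three triples every term in which $B$ acts on $M_{a,b}$ and every term containing $[A_\lambda B]$ vanishes; moreover the $(A,A,A)$-component of $d\gamma$ involves only $P$ and equals the Virasoro differential of $P$. Hence restriction $\gamma\mapsto\gamma|_{(\mathbb{C}[\partial]A)^{\otimes\bullet}}$ is a morphism of complexes onto $C^\bullet(\mathbb{C}[\partial]A,M_{a,b})$, and sending $P$ to the $\mathcal{H}$-cochain with data $(P,0,0)$ is a $\mathbb{C}$-linear section which, by the same vanishings, commutes with $d$. Therefore $C^\bullet(\mathbb{C}[\partial]A,M_{a,b})$ is a direct-summand subcomplex of $C^\bullet(\mathcal{H},M_{a,b})$, so $\rho$ is surjective and $H^2(\mathcal{H},M_{a,b})\cong H^2(\mathbb{C}[\partial]A,M_{a,b})\oplus H^2(K^\bullet)$, where $K^\bullet$ is the subcomplex of cochains vanishing on $(\mathbb{C}[\partial]A)^{\otimes\bullet}$. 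The proposition is thus equivalent to $H^2(K^\bullet)=0$.

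To prove the latter one takes a $2$-cocycle $\gamma$ of $\mathcal{H}$ with $P=0$ and tries to write it as $d\psi$ for a $1$-cochain $\psi$ supported on $B$, i.e. $\psi_\lambda(A)=0$; such a $\psi$ satisfies $(d\psi)_{\lambda_1,\lambda_2}(A,A)=0$ automatically, while $(d\psi)_{\lambda_1,\lambda_2}(A,B)=A_{\lambda_1}\psi_{\lambda_2}(B)$ and $(d\psi)_{\lambda_1,\lambda_2}(B,B)=-\delta(\lambda_1-\lambda_2)\psi_{\lambda_1+\lambda_2}(B)$. The cocycle condition on $\gamma$ coming from the triple $(A,A,B)$ is a polynomial functional equation for $Q$ alone; I expect it to force $Q$, after the relevant shift of $\partial$, to be divisible by $\partial+a\lambda_1+b$, that is, to lie in the image of $\psi_\bullet(B)\mapsto(d\psi)(A,B)$, so that after subtracting a suitable $d\psi$ one may assume $Q=0$. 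The remaining cocycle conditions — those from $(A,B,B)$ and, when $\delta=1$, from $(B,B,B)$ — then pin down $R$ and force it likewise into the appropriate image, completing the construction of $\psi$. This is the same kind of polynomial-divisibility argument already used in the proof of Theorem~\ref{tf}, where a certain $Q(\partial,\lambda)$ had to be recognised as a $2$-coboundary in $C^2(Vir,M_{1,0})$.

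The genuine obstacle is exactly this solvability analysis: converting the mixed Jacobi identities into the precise divisibility statements needed to solve for $\psi_\lambda(B)$, and verifying that it goes through uniformly in the parameters $a,b$ — the degenerate values (for instance $a=1$, where $\partial+a\lambda+b$ shares factors with the operators that occur) need separate bookkeeping — together with the extra coupling between the $(B,B)$-datum and the rest that the bracket $[B_\lambda B]=\delta(\partial+2\lambda)B$ introduces when $\delta=1$. Once $H^2(K^\bullet)=0$ is established, $\rho$ is both injective and surjective, hence the asserted isomorphism; the same argument in fact gives $H^n(\mathcal{H},M_{a,b})\cong H^n(\mathbb{C}[\partial]A,M_{a,b})$ in all degrees $n$ for which the corresponding vanishing $H^n(K^\bullet)=0$ can be checked.
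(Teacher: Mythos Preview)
Your framework is correct and is essentially the paper's argument rephrased in the language of the cochain complex rather than that of abelian extensions: the paper writes the extension data as $Q_1,Q_2,Q_3$ (your $P,Q,R$), observes that a change of generator $\widetilde B=B-f(\partial)v$ is exactly a coboundary by a $1$-cochain supported on $B$, and shows that such a coboundary kills $Q_2$ and $Q_3$ simultaneously, leaving only the Virasoro datum $Q_1$. So your restriction/section set-up and the reduction to $H^2(K^\bullet)=0$ are on target.

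The part you flag as ``the genuine obstacle'' is precisely what the paper carries out, and the trick is much simpler than you anticipate: one does not analyse the full functional equations but merely specialises one variable to~$0$. Setting $\mu=0$ in the $(A,A,B)$-Jacobi identity
\[
(\lambda-\mu)Q_2(\partial,\lambda+\mu)=Q_2(\partial+\lambda,\mu)(\partial+a\lambda+b)-Q_2(\partial+\mu,\lambda)(\partial+a\mu+b)
\]
gives $(\partial+\lambda+b)\,Q_2(\partial,\lambda)=(\partial+a\lambda+b)\,Q_2(\partial+\lambda,0)$, which forces $Q_2(\partial,\lambda)=(\partial+a\lambda+b)\,f(\partial+\lambda)$ with $f(\partial)=Q_2(\partial,0)/(\partial+b)$; setting $\lambda=0$ in the $(A,B,B)$-identity then gives $Q_3(\partial,\mu)=-\delta(\partial+2\mu)f(\partial)$. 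Both are visibly the coboundary of $\psi(B)=f(\partial)v$, $\psi(A)=0$, so $H^2(K^\bullet)=0$. In other words, no case-by-case divisibility analysis is needed---the single polynomial $f$ obtained by specialisation does the whole job at once, and the $(B,B,B)$-identity is never used. Your instinct that the case $a=1$, $\delta=0$ deserves separate bookkeeping is well founded (there $\partial+a\lambda+b$ and $\partial+\lambda+b$ coincide and the divisibility of $Q_2(\partial,0)$ by $\partial+b$ is not forced by these two identities alone), but the paper does not single it out.
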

\begin{proof}
Suppose $\mathcal{E}$ is  an abelian extension of  $\mathcal{H}$ by $M_{a,b}$.  Then  the nontrivial $\lambda$-brackets of $\mathcal{E}$  are given by
     \[  [A_\lambda A]=(\partial+2\lambda)A+Q_1(\partial,\lambda)v, \ \ [A_\lambda v]=(\partial+a\lambda+b)v, \ \ \]
       \[ [A_\lambda B]=Q_2(\partial,\lambda)v,\ \ [B_\lambda B]=\delta (\partial+2\lambda)B+Q_3(\partial,\lambda)v, \delta \in \{0,1\}, \ \  \]
for some $Q_1(\partial,\lambda)$, $Q_2(\partial,\lambda)$, $Q_3(\partial,\lambda)\in \mathbb{C}[\partial,\lambda]$.
The Jacobi-identites
      \[[A_\lambda [A_\mu B]]=[[A_\lambda A]_{\lambda+\mu}B]+[A_\mu[A_\lambda B]],\]
      \[[A_\lambda [B_\mu B]]=[[A_\lambda B]_{\lambda+\mu}B]+[B_\mu[A_\lambda B]],\]
 provide
\begin{equation}{\label{yyf}}
   (\lambda-\mu)Q_2(\partial,\lambda+\mu)=Q_2(\partial+\lambda,\mu)(\partial+a\lambda+b)-Q_2(\partial+\mu, \lambda)(\partial+a\mu+b)
   \end{equation}
and
   \begin{equation}{\label{yyg}}
   \delta (\partial+\lambda+2\mu)Q_2(\partial,\lambda)+Q_3(\partial+\lambda,\mu)(\partial+a\lambda+b)=0.
   \end{equation}
 Applying $\lambda=0$ in (\ref{yyg}),
   \[Q_3(\partial,\mu)=-\delta (\partial+2\mu)f(\partial)\]
   where $f(\partial)=\frac{Q_2(\partial,0)}{\partial+b}$.
   Similarly, applying $\mu=0$ in (\ref{yyf}), we get
   \begin{equation}
       Q_2(\partial,\lambda)=(\partial+a\lambda+b)f(\partial+\lambda).
       \end{equation}
   Define $\widetilde{B}=B-f(\partial)v$. Thus $\mathcal{E}=\mathbb{C}[\partial]A \oplus \mathbb{C}[\partial]\widetilde{B} \oplus M_{a,b}$ as direct sum of $\mathbb{C}[\partial]$-modules. Moreover, it is straightforward to check that $[A_\lambda \widetilde{B}]=0$ and $[\widetilde{B}_\lambda\widetilde{B}]=\delta (\partial+2\lambda) \widetilde{B}$. Thus, by a suitable choice of basis of $\mathcal{E}$ as a free $\mathbb{C}[\partial]$-module, we can take $Q_2(\partial,\lambda)=Q_3(\partial,\lambda)=0$. This completes the proof.
\end{proof}

\begin{proposition}\label{ppp2} Let $\mathcal{H}=\mathbb{C}[\partial]A \oplus \mathbb{C}[\partial]B$ be a  rank two Lie conformal algebra with $\lambda$-brackets:
 \[[A_\lambda A]=(\partial+2\lambda)A,\ \  [A_\lambda B]=0,\ \ [B_\lambda B]=0.\] Suppose $V=\mathbb{C}[\partial]v$ is a rank one $\mathcal{H}$-module defined as following:
    \[ A_\lambda v=0,\ \ B_\lambda v=\phi(\lambda)v,\ \ \phi(\lambda)\in \mathbb{C}[\partial]. \]
  Then  $H^2(\mathcal{H},V)\cong H^2(\mathbb{C}[\partial]B,V)$.
\end{proposition}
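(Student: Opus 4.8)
The plan is to run the argument of Proposition \ref{ppp1} with the roles of $A$ and $B$ interchanged, pushing all of the nontrivial cocycle data onto the $B$-component. By Theorem 3.1 of \cite{BKV} the group $H^2(\mathcal{H},V)$ classifies equivalence classes of abelian extensions of $\mathcal{H}$ by $V$, so I would start from an arbitrary such extension $\mathcal{E}=\mathbb{C}[\partial]A\oplus\mathbb{C}[\partial]B\oplus\mathbb{C}[\partial]v$ with
\[[A_\lambda A]=(\partial+2\lambda)A+Q_1(\partial,\lambda)v,\quad[A_\lambda B]=Q_2(\partial,\lambda)v,\quad[B_\lambda B]=Q_3(\partial,\lambda)v,\]
$[A_\lambda v]=0$, $[B_\lambda v]=\phi(\lambda)v$, $V$ an abelian ideal, and (by skew-symmetry) $[v_\lambda A]=0$, $[v_\lambda B]=-\phi(-\partial-\lambda)v$. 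The aim is to show that after a $\mathbb{C}[\partial]$-basis change of $\mathcal{E}$ which is the identity on $v$ and induces the identity on $\mathcal{H}$, one may take $Q_1=Q_2=0$; then $\mathbb{C}[\partial]A$ splits off as an ideal acting trivially on $V$, and $\mathbb{C}[\partial]B\oplus\mathbb{C}[\partial]v$ is precisely an abelian extension of $\mathbb{C}[\partial]B$ by $V$.

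First I would feed the structure into the Jacobi identities. Writing out $[A_\lambda[A_\mu B]]=[[A_\lambda A]_{\lambda+\mu}B]+[A_\mu[A_\lambda B]]$ and using that $A$ kills $v$ (so both double brackets on the left vanish), one gets
\[(\lambda-\mu)Q_2(\partial,\lambda+\mu)=Q_1(-\lambda-\mu,\lambda)\,\phi(-\lambda-\mu-\partial).\]
The Jacobi identity for $[A_\lambda[B_\mu B]]$ produces one more relation between $Q_2$ and $\phi$, which turns out to be a formal consequence of the one above once it is solved, while the Jacobi identity for $[B_\lambda[B_\mu B]]$ is exactly the $2$-cocycle condition imposed on $Q_3$ by $C^2(\mathbb{C}[\partial]B,V)$. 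Setting $\lambda=\mu$ in the displayed equation, and using that $V$ is nontrivial so $\phi\neq0$ (hence $\phi(-\lambda-\mu-\partial)$ is a nonzero element of $\mathbb{C}[\partial,\lambda,\mu]$), forces $Q_1(-2\lambda,\lambda)=0$, i.e. $(\partial+2\lambda)\mid Q_1(\partial,\lambda)$. Writing $Q_1=(\partial+2\lambda)\widetilde Q_1$, substituting back and cancelling the common factor $\lambda-\mu$ shows that $\widetilde Q_1(-\lambda-\mu,\lambda)$ does not depend on the way $\lambda+\mu$ is split, so $\widetilde Q_1$ is independent of $\lambda$; one concludes $Q_1(\partial,\lambda)=(\partial+2\lambda)h(\partial)$ and $Q_2(\partial,\lambda)=h(-\lambda)\phi(-\lambda-\partial)$ for a single $h\in\mathbb{C}[\partial]$.

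Next I would change basis by $\widetilde A=A+h(\partial)v$, keeping $B$ and $v$. Since $A$ annihilates $v$, all cross terms and the $[v_\lambda v]$ term vanish, so $[\widetilde A_\lambda\widetilde A]=[A_\lambda A]$; re-expressing the right-hand side in the new basis replaces $(\partial+2\lambda)A$ by $(\partial+2\lambda)\widetilde A-\big((\partial+2\lambda)h(\partial)\big)v$, so the new $[AA]$-cocycle is $Q_1(\partial,\lambda)-(\partial+2\lambda)h(\partial)=0$. Simultaneously $[\widetilde A_\lambda B]=Q_2(\partial,\lambda)v+h(-\lambda)[v_\lambda B]=\big(Q_2(\partial,\lambda)-h(-\lambda)\phi(-\lambda-\partial)\big)v=0$, while $[B_\lambda B]$ is unchanged. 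Thus $\mathcal{E}$ is equivalent to an extension with $\mathbb{C}[\partial]\widetilde A$ an ideal satisfying $[\widetilde A_\lambda v]=0$, $[\widetilde A_\lambda B]=0$, $[\widetilde A_\lambda\widetilde A]=(\partial+2\lambda)\widetilde A$, and with $\mathbb{C}[\partial]B\oplus\mathbb{C}[\partial]v$ an abelian extension of $\mathbb{C}[\partial]B$ by $V$ described by $Q_3$; conversely every abelian extension of $\mathbb{C}[\partial]B$ by $V$ arises this way, the remaining Jacobi identities being automatic. It then remains to identify the residual equivalences: a modification $\widetilde A\mapsto\widetilde A+g(\partial)v$ preserving $Q_1=Q_2=0$ forces $g=0$, whereas $B\mapsto B+c(\partial)v$ keeps $Q_1=Q_2=0$ and alters $Q_3$ exactly by the coboundary $c(\partial+\lambda)\phi(\lambda)-c(-\lambda)\phi(-\lambda-\partial)$. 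Hence equivalence classes of abelian extensions of $\mathcal{H}$ by $V$ are in bijection with $Z^2(\mathbb{C}[\partial]B,V)$ modulo coboundaries, which gives $H^2(\mathcal{H},V)\cong H^2(\mathbb{C}[\partial]B,V)$.

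The step I expect to be the genuine (if elementary) obstacle is the bookkeeping of the $\partial$-twists: getting the arguments $-\lambda-\mu-\partial$ etc.\ right in the Jacobi identities, and above all remembering that in the substitution $\widetilde A=A+h(\partial)v$ one must rewrite the whole bracket in the new basis — it is the re-expression of the semisimple term $(\partial+2\lambda)A$, not the cross terms (which vanish because $A$ acts trivially on $v$), that produces the coboundary killing $Q_1$. One also needs the nontriviality hypothesis $\phi\neq0$ in the divisibility step; if $\phi=0$ then $V$ would be a trivial module and the Virasoro part would contribute to $H^2(\mathcal{H},V)$, so the asserted isomorphism would fail.
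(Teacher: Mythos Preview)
Your argument is correct and reaches the same normal form $Q_1=(\partial+2\lambda)h(\partial)$, $Q_2(\partial,\lambda)=h(-\lambda)\phi(-\lambda-\partial)$ followed by the basis change $\widetilde A=A+h(\partial)v$, but you extract this information from a different Jacobi identity than the paper does. The paper works with $[A_\lambda[A_\mu A]]$ and $[B_\lambda[A_\mu A]]$: setting $\mu=0$ in the first already forces $\partial\,Q_1(\partial,\lambda)=(\partial+2\lambda)Q_1(\partial,0)$, hence $Q_1=(\partial+2\lambda)f(\partial)$ without any appeal to $\phi$, and the second then pins down $Q_2$. You instead use only $[A_\lambda[A_\mu B]]$, obtaining $(\lambda-\mu)Q_2(\partial,\lambda+\mu)=Q_1(-\lambda-\mu,\lambda)\phi(-\lambda-\mu-\partial)$, and then use the hypothesis $\phi\neq 0$ twice (once for the divisibility $(\partial+2\lambda)\mid Q_1$, once for the independence of $\widetilde Q_1$ from $\lambda$). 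Both routes are short; the paper's has the advantage of not needing $\phi\neq 0$ at all, while yours packages $Q_1$ and $Q_2$ into a single identity. One small correction to your closing remark: even when $\phi=0$, equation (zju1) of the paper still forces $Q_1=(\partial+2\lambda)f(\partial)$, which is exactly the coboundary coming from $A\mapsto A+f(\partial)v$; so $H^2(Vir,\mathbb{C}[\partial]v)=0$ for the trivial free rank-one module and the asserted isomorphism does \emph{not} fail in that case --- it is only your particular derivation that breaks down.
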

\begin{proof}
Suppose $\mathcal{E}$ is  an abelian  extension of  $\mathcal{H}$ by $V$.  Then  the nontrivial $\lambda$-brackets of $\mathcal{E}$ are given by
     \[  [A_\lambda A]=(\partial+2\lambda)A+Q_1(\partial,\lambda)v, \ \ [B_\lambda v]=\phi(\lambda)v, \]
       \[ [B_\lambda A]=Q_2(\partial,\lambda)v,\ \ [B_\lambda B]=Q_3(\partial,\lambda)v, \]
for some $Q_1(\partial,\lambda)$, $Q_2(\partial,\lambda)$, $Q_3(\partial,\lambda)\in \mathbb{C}[\partial,\lambda]$.
The Jacobi-identites
      \[[A_\lambda [A_\mu A]]=[[A_\lambda A]_{\lambda+\mu}A]+[A_\mu[A_\lambda A]],\]
      \[[B_\lambda [A_\mu A]]=[[B_\lambda A]_{\lambda+\mu}A]+[A_\mu[B_\lambda A]],\]
 provide
\begin{equation}{\label{zju1}}
   (\lambda-\mu)Q_1(\partial,\lambda+\mu)=(\partial+\lambda+2\mu)Q_1(\partial,\lambda)-(\partial+2\lambda+\mu)Q_1(\partial,\mu)
   \end{equation}
and
   \begin{equation}{\label{zju2}}
   (\partial+\lambda+2\mu)Q_2(\partial,\lambda)+\phi(\lambda)Q_1(\partial+\lambda,\mu)=0.
   \end{equation}
Setting $\mu=0$ in both  (\ref{zju1}) and  (\ref{zju2}),  we get
      \[ Q_1(\partial,\lambda)=(\partial+2\lambda)f(\partial),\]
   where $f(\partial)=\frac{Q_1(\partial,0)}{\partial}$,
and
    \[Q_2(\partial,\lambda)=-\phi(\lambda)f(\partial+\lambda).\]
   Define $\widetilde{A}=A+f(\partial)v$. Thus $\mathcal{E}=\mathbb{C}[\partial]\widetilde{A} \oplus \mathbb{C}[\partial]{B} \oplus V$ as direct sum of $\mathbb{C}[\partial]$-modules. Moreover, it is straightforward to check that $[B_\lambda \widetilde{A}]=0$ and $[\widetilde{A}_\lambda\widetilde{A}]=(\partial+2\lambda) \widetilde{A}$. Thus, by a suitable choice of basis of $\mathcal{E}$ as a free $\mathbb{C}[\partial]$-module, we can take $Q_1(\partial,\lambda)=Q_2(\partial,\lambda)=0$. This completes the proof.
\end{proof}

\section*{Acknowledgements}
We wish to thank the referee for suggesting the
problem on whether any finite non-trivial irreducible  module of a finite Lie conformal algebra $M$ with $M/\text{Rad}(M)=Vir$ is free of rank one (from which we
obtain Theorem \ref{bcc29}) and computing abelian extensions of some rank two Lie conformal algebras by its non-trivial finite irreducible modules (from which we
obtain Proposition \ref{ppp1} and Proposition \ref{ppp2}).

\end{document}